\documentclass[reqno, 11pt]{amsart}
\usepackage{amsmath}
 \usepackage{amssymb}
\usepackage{amsthm}
\usepackage{times}
\usepackage{latexsym}
\usepackage[mathscr]{eucal}

\numberwithin{equation}{section}
 
  \newtheorem{theorem}{Theorem}[section]
  \newtheorem{proposition}[theorem]{Proposition}
  \newtheorem{lemma}[theorem]{Lemma}
  \newtheorem{corollary}[theorem]{Corollary}

  \newtheorem{definition}[theorem]{Definition}

\title[Sasakian Finsler structures on pulled-back bundle]{Sasakian Finsler structures on pulled-back bundle}
\author[ Fortun\'{e} Massamba and Salomon Joseph Mbatakou]{ Fortun\'{e} Massamba*, \, Salomon Joseph Mbatakou** }

\newcommand{\acr}{\newline\indent}

\address{\llap{*\,}  School of Mathematics, Statistics and Computer Science\acr
 University of KwaZulu-Natal\acr
 Private Bag X01, Scottsville 3209\acr
South Africa}
\email{massfort@yahoo.fr, Massamba@ukzn.ac.za}

\address{\llap{**\,}  School of Mathematics, Statistics and Computer Science\acr
 University of KwaZulu-Natal\acr
 Private Bag X01, Scottsville 3209\acr
South Africa}
\email{mbatakou@gmail.com, mbatakous@ukzn.ac.za}

\thanks{}

\subjclass[2010]{Primary 53D10; Secondary 53C60}

\keywords{Almost contact structure; Contact structure; Finslerian manifold.}

\begin{document}
\maketitle

 \begin{abstract}
 Under a pulled-back approach given in \cite{AE} and firstly presented in \cite{AA} , we introduce, in this paper, the concepts of almost contact  and normal almost contact Finsler structures on the pulled-back bundle.  Properties of structures partly Sasakians are studied.  Using the hh-curvature tensor of Chern connection given in \cite{AA}, we obtain some characterizations of horizontally Finslerian K-contact structures via the horizontal Ricci tensor and  the  flag curvature.
 \end{abstract}

 \section {Introduction}
 Let $(M,F)$ be a Finsler manifold and $TM_{0}$ be its slit tangent bundle. There exist, in the literature, several frameworks for the study of 
Finsler geometry. For instance, an approach through the double tangent bundle $TTM_{0}$ (Grifone's approach), an approach via the vertical subbundle of $TTM_{0}$ (Bejancu-Farran, Abate-Patrizio, etc.) and the pulled-back bundle approach (Bao-Chern-Shen,$\cdots$). The latter motivates this paper. In fact, it is the most natural approach, because it facilitates the analogy with the Riemannian geometry. 
The key idea of this study is to construct, like in Riemannian case, the Sasakian structures in Finsler geometry. Let $\pi$ be a canonical submersion from $T M_{0}$ onto $M$. The pulled-back bundle $\pi^{\ast}TM$, which is nothing but a collection of fibers of $TM$ on $TM_{0}$, offers an adequate framework of this study.    

The paper is organized as follows. In Section \ref{section1}, we recall some basic definitions and necessary geometric concepts that are used 
throughout this paper. We also define in an adapted tensorial formalism, an almost contact Finsler structure $(\phi, \eta, \xi)$, on pulled-back bundle $\pi^{\ast}TM$. We introduce in Section \ref{Almost} almost contact and contact metric Finsler structures, and obtain some characterizations.
We define horizontally and vertically Sasakian Finsler structures and K-contact Finsler structures. Under some condition, we prove that contact metric 
Finsler structure on $\pi^{\ast}TM $,  is horizontally K-contact and vertically K-contact. Using the Chern connection, we determine the covariant derivative of $\phi$ and $\xi$, in any direction, thereby generalizing the Riemannian case for the Levi-civita connection. In the same section, we discuss of some aspects of hh-curvature with respect to the Chern connection, of contact metric Finsler pulled-back bundle. We obtain the horizontal Ricci $(1,1;0)$-tensor relatively to $\xi$ and his horizontal representative. Finally, with the aid of the flag curvature with transverse edge $\xi$, we obtain a Finslerian analogous characterization's result of Hatakeyama, Ogawa and Tanno's \cite{HOT}, on K-contact metric  structures.

 \section{Preliminaries}\label{section1}

Let $\pi: TM \rightarrow M $ be a tangent bundle of connected smooth Finsler manifold $M$ of odd-dimension $m=2n+1$.
We denote by $v= (x,y)$ the points in $TM$ if $y\in \pi^{-1}(x)= T_{x}M$. We denote by $O(M)$ the zero section of $TM$,
and by $TM_{0}$ the slit tangent bundle $ TM\setminus O(M)$. We introduce a coordinate system on $TM$ as follows.
Let $U\subset M$ be an open set with local coordinate $(x^{1},...,x^{m})$.
By setting $v= y^{i}\frac{\partial}{\partial x^{i}}$ for every $v\in\pi^{-1}(U) $, we introduce a
local coordinate $(x,y)=(x^{1},...,x^{m},y^{1},...,y^{m})$ on $\pi^{-1}(U)$.

\begin{definition}{\rm
 A function $F: TM \rightarrow [0,+\infty[$ is called a Finsler structure or Finsler metric on $M$ if:
  \begin{enumerate}
  \item[(i)] $F\in C^{\infty}(TM_{0})$,
\item[(ii)]  $F(x,\lambda y)= \lambda F(x,y)$, for all $ \lambda > 0$,
\item[(iii)]  The $m\times m$ Hessian matrix $( g_{ij})$, where
$
\displaystyle\label{ft} g_{ij}:= \frac{1}{2}(F^{2})_{y^{i}y^{j}}
$
  is positive-definite at all $(x,y)$ of $TM_{0}$.
  \end{enumerate}
  }
\end{definition}
The pair $(M, F )$ is called \textit{Finsler manifold}. The pulled-back bundle  $\pi^{\ast}TM$ is a vector bundle over the slit tangent bundle $ TM_{0}$,
defined by
\begin{equation}
\pi^{\ast}TM:= \{(x,y,v)\in TM_{0}\times TM: v \in T_{\pi(x,y)}M \}.
\end{equation}
 By the objects (\ref{ft}), the pulled-back vector bundle  $\pi^{\ast}TM$ admits a natural Riemannian metric
 \begin{equation}
 g:= g_{ij}dx^{i}\otimes dx^{j}.
 \end{equation}
 This is the Finslerian fundamental tensor in a sense that we will specify later. Likewise, there is the Finslerian Cartan tensor
 \begin{equation}\label{Atensor}
 A= A_{ijk}dx^{i}\otimes dx^{j}\otimes dx^{k},\;\;\mbox{with}\;\; A_{ijk}:= \frac{F}{2}\frac{\partial g_{ij}}{\partial y^{k}}.
 \end{equation} 
Note that, with a slight abuse of notation, $\frac{\partial}{\partial x^{i}}$ and $dx^{i}$ are regarded as sections
 of $\pi^{\ast}TM$ and $\pi^{\ast}T^{\ast}M$, respectively.

 Now we will give some geometric tools for understanding the intrinsic formulation of geometric objects that we used in this paper.

 It is well know that, the kernel of $\pi_{\ast}$, spanned the vertical subbundle $\mathcal{V}$ of $TTM_{0}$. An Ehresmann 
 connection is the choice of the horizontal complementary  $\mathcal{H}\subset TTM_{0}$ such that 
\begin{equation}\label{dec}
TTM_{0} =   \mathcal{H} \oplus   \mathcal{V}.
\end{equation}
% 
% For the differential $\pi_{\ast}$ of the submersion $\pi: TM_{0}\rightarrow M$, the vertical subbundle $\mathcal{V}$ of
% $TTM_{0}$ is defined by $\mathcal{V}= \ker \pi_{\ast}$, and $\mathcal{V}$ is locally spanned by $\{F\frac{\partial}
% {\partial y^{1}},...,F\frac{\partial}{\partial y^{n}}\}$ on each $\pi^{-1}(U)$. Then, it induces the exact sequence
%  \begin{equation}\label{sui}
%  0\longrightarrow \mathcal{V}  \stackrel{i}{\longrightarrow}  TTM_{0}\stackrel{\pi_{\ast}}{\longrightarrow}
%  \pi^{\ast}TM \longrightarrow 0,
%  \end{equation}
%  So, the
% vertical subbundle is defined by $\mathcal{V}=\ker \pi_{\ast}$, while the horizontal subbundle $\mathcal{H}$ is
% defined by a subbundle $\mathcal{H}\subset TTM_{0}$, which is complementary to $\mathcal{V}$. These subbundles
% give a smooth splitting
% \begin{equation}\label{dec}
% TTM_{0} =   \mathcal{H} \oplus   \mathcal{V}.
% \end{equation}
% Although the vertical subbundle $ \mathcal{V}$ is uniquely determined, the horizontal subbundle is not canonically
% determined. An Ehresmann connection of the submersion $\pi:TM_{0}\rightarrow M$ is a selection of horizontal subbundles.

In this paper, we shall consider the choice of Ehresmann connection which arises from the Finsler structure $F$,
constructed as follows. Recall that \cite{AF} every Finslerian structure $F$ induces a spray
$$
G= y^{i}\frac{\partial}{\partial x^{i}}- 2 G^{i}(x,y)\frac{\partial}{\partial y^{i}},
$$
in which the spray coefficients  $G^{i}$ are defined by
\begin{equation}
 G^{i}(x,y):= \frac{1}{4}g^{il}\left[2 \frac{\partial g_{jl}}{\partial x^{k}}(x,y)-
 \frac{\partial g_{jk}}{\partial x^{l}}(x,y)\right]y^{j}y^{k},
\end{equation}
where the matrix $(g^{ij})$ means the inverse of $(g_{ij})$.

Define a $\pi^{\ast}TM$-valued smooth form on $TM_{0}$ by
\begin{equation}\label{teta}
\theta = \frac{\partial}{\partial x^{i}}\otimes \frac{1}{F}(dy^{i}+N_{j}^{i}dx^{j}),
\end{equation}
where functions  $N^{i}_{j}(x,y)$ are given by
$ 
\displaystyle N^{i}_{j}(x,y):= \frac{\partial G^{i}}{\partial y^{j}}(x,y).
$ 
This $\pi^{\ast}TM$-valued smooth form  $\theta$ is globally well defined on $TM_{0}$ \cite{AC}.

 By the form $\theta$, defined in (\ref{teta}) which is called Finsler-Ehresmann form, we can define a Finsler-Ehresmann connection  as follow.
\begin{definition}{\rm
 A Finsler-Ehresmann connection of the submersion $\pi:TM_{0}\rightarrow M$ is the subbundle $\mathcal{H}$ of $TTM_{0}$
 given by $ \mathcal{H} = \ker \theta $, where $\theta : TTM_{0}\rightarrow \pi^{\ast}TM$ is the bundle morphism defined in  (\ref{teta}),
 and which is complementary to the vertical subbundle $\mathcal{V}$.}
\end{definition}
It is well know that, $\pi^{\ast}TM$ can be naturally identified with the horizontal subbundle $\mathcal{H}$ and the vertical one
$\mathcal{V}$ \cite{AE}.
Thus, any section $\overline{X}$ of $\pi^{\ast}TM$ is considered as a section of $\mathcal{H}$ or a section of $\mathcal{V}$.
We denote by $\overline{X}^{H}$ and $\overline{X}^{V}$ respectively, the section
of $\mathcal{H}$ and the section of $\mathcal{V}$ corresponding to $\overline{X}\in \Gamma (\pi^{\ast}TM)$:
\begin{equation}\label{tr}
 \overline{X}= \frac{\partial}{\partial x^{i}}\otimes \overline{X}^{i}\in \pi^{\ast}TM
\Longleftrightarrow \overline{X}^{H}=\frac{\delta}{\delta x^{i}}\otimes \overline{X}^{i} \in \Gamma(\mathcal{H}),
\end{equation}
and
\begin{equation}
\overline{X}= \frac{\partial}{\partial x^{i}}\otimes \overline{X}^{i}\in \pi^{\ast}TM
\Longleftrightarrow \overline{X}^{V}=F\frac{\partial}{\partial y^{i}}\otimes \overline{X}^{i} \in \Gamma(\mathcal{V}),
\end{equation}
where
$$
\{F\frac{\partial}{\partial y^{i}}:= (\frac{\partial}{\partial x^{i}})^{V}\}_{i= 1,...,m}\;\;\mbox{and}\;\;\{\frac{\delta}{\delta x^{i}}:=\frac{\partial}{\partial x^{i}} - N_{j}^{i}\frac{\partial}{\partial y^{i}} = (\frac{\partial}{\partial x^{i}})^{H}\}_{i= 1,...,m},
$$
are the vertical and horizontal lifts of natural local frame field $\{ \frac{\partial}{\partial x^{1}},..., \frac{\partial}{\partial x^{m}}\}$ with respect to the Finsler-Ehresmann connection $\mathcal{H}$, respectively. 
\begin{proposition} \label{PropIso}\cite{AE}
 The bundle morphism $\pi_{\ast}$ and $\theta$ satisfy
 $$
 \pi_{\ast}(\overline{X}^{H})= \overline{X}, \;\;\; \pi_{\ast}(\overline{X}^{V})= 0 \; \; \mbox{and}\;\;  \theta(\overline{X}^{H})= 0, \;\;\;  \theta(\overline{X}^{V})= \overline{X}
 $$ 
for every $\overline{X} \in \Gamma(\pi^{\ast}TM)$.
\end{proposition}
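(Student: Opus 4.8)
The plan is to verify the four identities directly, exploiting that both $\pi_{\ast}$ and $\theta$ are $C^{\infty}(TM_{0})$-linear bundle morphisms, so that it suffices to check them on the local generators $\frac{\partial}{\partial x^{i}}$ of $\pi^{\ast}TM$ and then extend by linearity to an arbitrary section $\overline{X}=\frac{\partial}{\partial x^{i}}\otimes\overline{X}^{i}$. Under the identifications (\ref{tr}), the horizontal and vertical lifts of the generator $\frac{\partial}{\partial x^{i}}$ are exactly the frame fields $\frac{\delta}{\delta x^{i}}$ and $F\frac{\partial}{\partial y^{i}}$, so everything reduces to evaluating $\pi_{\ast}$ and $\theta$ on these two frames and contracting with $\overline{X}^{i}$.

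For the statements involving $\pi_{\ast}$, I would use that $\pi:TM_{0}\to M$ is the coordinate projection $(x,y)\mapsto x$, whence $\pi_{\ast}\bigl(\frac{\partial}{\partial x^{i}}\bigr)=\frac{\partial}{\partial x^{i}}$ and $\pi_{\ast}\bigl(\frac{\partial}{\partial y^{i}}\bigr)=0$. Applying this to $\frac{\delta}{\delta x^{i}}=\frac{\partial}{\partial x^{i}}-N_{i}^{j}\frac{\partial}{\partial y^{j}}$ annihilates the vertical term and yields $\pi_{\ast}\bigl(\frac{\delta}{\delta x^{i}}\bigr)=\frac{\partial}{\partial x^{i}}$, which after contraction with $\overline{X}^{i}$ gives $\pi_{\ast}(\overline{X}^{H})=\overline{X}$; likewise $\pi_{\ast}\bigl(F\frac{\partial}{\partial y^{i}}\bigr)=0$ gives $\pi_{\ast}(\overline{X}^{V})=0$.

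For the statements involving $\theta$, I would pair the one-form coefficients $\frac{1}{F}(dy^{i}+N_{j}^{i}dx^{j})$ of (\ref{teta}) against the two frames. Evaluating on $\frac{\delta}{\delta x^{k}}$ uses $dx^{j}\bigl(\frac{\delta}{\delta x^{k}}\bigr)=\delta_{k}^{j}$ and $dy^{i}\bigl(\frac{\delta}{\delta x^{k}}\bigr)=-N_{k}^{i}$, so the bracket contributes $\frac{1}{F}(-N_{k}^{i}+N_{k}^{i})=0$; this cancellation is the whole content of $\theta(\overline{X}^{H})=0$. Evaluating on $F\frac{\partial}{\partial y^{k}}$ uses $dx^{j}\bigl(F\frac{\partial}{\partial y^{k}}\bigr)=0$ and $dy^{i}\bigl(F\frac{\partial}{\partial y^{k}}\bigr)=F\delta_{k}^{i}$, so the bracket contributes $\frac{1}{F}\cdot F\delta_{k}^{i}=\delta_{k}^{i}$, giving $\theta\bigl(F\frac{\partial}{\partial y^{k}}\bigr)=\frac{\partial}{\partial x^{k}}$ and hence $\theta(\overline{X}^{V})=\overline{X}$.

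There is no serious obstacle here: the result is a frame-level bookkeeping verification, and the only point requiring care is the sign cancellation $-N_{k}^{i}+N_{k}^{i}=0$ in the horizontal case, which is precisely why the connection coefficients $N_{j}^{i}$ appear in both the definition of $\frac{\delta}{\delta x^{i}}$ and the Finsler–Ehresmann form $\theta$. I would also double-check the index conventions, namely the placement of the summed index in $N_{j}^{i}$, so that the two occurrences of $N$ genuinely match and cancel, but this is routine.
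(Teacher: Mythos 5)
Your verification is correct, and it is worth noting that the paper itself contains no proof of this proposition: it is stated with a citation to Aikou--Kozma \cite{AE}, so your frame-level computation supplies the argument the paper leaves implicit (and it is surely the intended one). The structure is sound: $\pi_{\ast}$, read fiberwise as a bundle morphism $TTM_{0}\to\pi^{\ast}TM$ with $\pi_{\ast,(x,y)}:T_{(x,y)}TM_{0}\to T_{x}M$, and $\theta$ are both $C^{\infty}(TM_{0})$-linear, so checking on the frames $\frac{\delta}{\delta x^{i}}$ and $F\frac{\partial}{\partial y^{i}}$ and contracting with $\overline{X}^{i}$ is legitimate; your evaluations $dy^{i}\bigl(\frac{\delta}{\delta x^{k}}\bigr)=-N^{i}_{k}$ and $dx^{j}\bigl(\frac{\delta}{\delta x^{k}}\bigr)=\delta^{j}_{k}$ give precisely the cancellation $\frac{1}{F}(-N^{i}_{k}+N^{i}_{k})=0$ against the $N^{i}_{j}dx^{j}$ term of (\ref{teta}), and the factor $F$ in the vertical lift is exactly what absorbs the $\frac{1}{F}$ to yield $\theta(\overline{X}^{V})=\overline{X}$. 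One point you flagged as ``routine'' is in fact where the paper is sloppy: its displayed definition reads $\frac{\delta}{\delta x^{i}}:=\frac{\partial}{\partial x^{i}}-N^{i}_{j}\frac{\partial}{\partial y^{i}}$, with the index $i$ occurring three times, which cannot be right as written; your placement $\frac{\delta}{\delta x^{i}}=\frac{\partial}{\partial x^{i}}-N^{j}_{i}\frac{\partial}{\partial y^{j}}$ is the one forced by the cancellation in $\theta\bigl(\frac{\delta}{\delta x^{k}}\bigr)=0$, so your proof tacitly corrects the paper's typo rather than inheriting it. The only other subtlety, which your wording handles correctly, is that $\pi_{\ast}(\overline{X}^{H})=\overline{X}$ is an identity of sections of $\pi^{\ast}TM$, not of vector fields on $M$: a general $\overline{X}^{H}$ is not $\pi$-projectable, so the statement only makes sense with the fiberwise interpretation of $\pi_{\ast}$ that also justifies the $C^{\infty}(TM_{0})$-linearity you invoke.
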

The Proposition \ref{PropIso} means that $\mathcal{H} T M_{0}$, as well as $\mathcal{V} T M_{0}$, can be naturally identified with the bundle $\pi^{*} T M$, that is,
\begin{equation}\label{Isomorphs}
 \mathcal{H} T M_{0}\cong \pi^{*} T M\;\;\mbox{and}\;\; \mathcal{V} T M_{0}\cong \pi^{*} T M.
\end{equation} 

Next, we recall the definition of the Chern connection on the pulled-back bundle which is going to be used throughout the paper. This connection is symmetric but not always compatible with the metric of the underlying manifold.
\begin{theorem} \cite{mb1}\label{th1}
 Let $(M,F)$ be a Finsler manifold, $g$ a fundamental tensor of $F$ and $\theta$ the vector form defined in (\ref{teta}).  There exist a unique linear connection $\nabla$ on $\pi^{\ast}TM$ such that, for all $X,Y \in \Gamma(TTM_{0})$ and $\overline{Y}, \overline{Z} \in \Gamma(\pi^{\ast}TM)$, we have,
 \begin{enumerate}
  \item[(a)] Symmetry
  \begin{equation}\label{et1}
  \nabla_{X}\pi_{\ast}Y - \nabla_{Y}\pi_{\ast}X = \pi_{\ast}[X,Y],
  \end{equation}\label{A-comp}
   \item[(b)] Almost $g$-compatibility
   \begin{equation}\label{et2}
 (\nabla_{X} g)(,\overline{Y},\overline{Z})= 2A(\theta(X),\overline{Y},\overline{Z}),
   \end{equation}
 where $A$ is the Cartan tensor defined in (\ref{Atensor}).
 \end{enumerate}
\end{theorem}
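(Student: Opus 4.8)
The plan is to prove this exactly as one proves the fundamental theorem of Riemannian geometry: produce a Koszul-type formula that forces uniqueness, and then check that the very same formula defines a genuine connection satisfying (a) and (b). I would first treat uniqueness. Suppose $\nabla$ exists and obeys both axioms. Rewriting (b) as $X g(\overline{Y},\overline{Z}) = g(\nabla_{X}\overline{Y},\overline{Z}) + g(\overline{Y},\nabla_{X}\overline{Z}) + 2A(\theta(X),\overline{Y},\overline{Z})$, I would specialize $\overline{Y}=\pi_{\ast}Y$, $\overline{Z}=\pi_{\ast}Z$ for $X,Y,Z\in\Gamma(TTM_{0})$, write the three cyclic versions in $(X,Y,Z)$, and form the alternating sum (first plus second minus third). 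Using the symmetry axiom (a) in the form $\nabla_{X}\pi_{\ast}Y-\nabla_{Y}\pi_{\ast}X=\pi_{\ast}[X,Y]$ to pair off the covariant-derivative terms, the totally symmetric Cartan corrections survive and one is left with
\begin{align*}
2g(\nabla_{X}\pi_{\ast}Y, \pi_{\ast}Z) &= X g(\pi_{\ast}Y,\pi_{\ast}Z) + Y g(\pi_{\ast}Z,\pi_{\ast}X) - Z g(\pi_{\ast}X,\pi_{\ast}Y)\\
&\quad + g(\pi_{\ast}[X,Y],\pi_{\ast}Z) - g(\pi_{\ast}[X,Z],\pi_{\ast}Y) - g(\pi_{\ast}[Y,Z],\pi_{\ast}X)\\
&\quad - 2A(\theta(X),\pi_{\ast}Y,\pi_{\ast}Z) - 2A(\theta(Y),\pi_{\ast}X,\pi_{\ast}Z) + 2A(\theta(Z),\pi_{\ast}X,\pi_{\ast}Y).
\end{align*}
Because $g$ is positive definite and, by Proposition \ref{PropIso}, every section $\overline{Z}\in\Gamma(\pi^{\ast}TM)$ is of the form $\pi_{\ast}Z$ (take $Z=\overline{Z}^{H}$, so that $\pi_{\ast}\overline{Z}^{H}=\overline{Z}$), the right-hand side determines $g(\nabla_{X}\pi_{\ast}Y,\cdot)$ on all of $\pi^{\ast}TM$, hence determines $\nabla_{X}\pi_{\ast}Y$ uniquely; and since every section of $\pi^{\ast}TM$ is such a $\pi_{\ast}Y$, this pins down $\nabla$ completely.

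For existence I would turn the formula into a definition: given $X\in\Gamma(TTM_{0})$ and $\overline{Y}\in\Gamma(\pi^{\ast}TM)$, set $Y=\overline{Y}^{H}$ and declare $g(\nabla_{X}\overline{Y},\overline{Z})$ to be the right-hand side above, with $\overline{Z}$ arbitrary; nondegeneracy of $g$ then yields a well-defined section $\nabla_{X}\overline{Y}$. I would then verify the three things that make this a connection with the stated properties: additivity and $C^{\infty}(TM_{0})$-linearity in the slot $X$; the Leibniz rule $\nabla_{X}(f\overline{Y})=(Xf)\overline{Y}+f\nabla_{X}\overline{Y}$ in the differentiated slot; and finally that (a) and (b) hold, the latter essentially by reversing the cyclic computation. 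Here the Finslerian Cartan term is exactly what is needed: without the $2A(\theta(\cdot),\cdot,\cdot)$ corrections the derivatives $X g(\pi_{\ast}Y,\pi_{\ast}Z)$ would not reassemble into a metric-type identity, since $g_{ij}$ depends on the fibre variable $y$ and the adapted frame $\{\delta/\delta x^{i}, F\partial/\partial y^{i}\}$ differentiates that dependence nontrivially.

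The main obstacle is the existence half, and within it two delicate points. First, well-definedness: the bundle map $\pi_{\ast}$ has the vertical bundle as kernel, so a priori the right-hand side of the Koszul formula could depend on the choice of lifts $Y,Z$ rather than only on $\pi_{\ast}Y,\pi_{\ast}Z$; committing to the horizontal lift removes the ambiguity, but one must confirm that the resulting object still obeys the tensorial and derivation laws. This is where the function-derivative terms coming from the brackets $[X,Y]$, $[X,Z]$ must be tracked and shown to combine precisely with the $Xg(\ldots)$ and $(Xf)$ terms; the computation is routine but the bookkeeping is heavier than in the Riemannian case, because the brackets of the adapted frame fields are not zero and because both $\theta$ and $A$ feel the $y$-dependence. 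Second, one should record that the construction is consistent with the identifications (\ref{Isomorphs}): the same $\nabla$ differentiates a section regarded in $\mathcal{H}$ or in $\mathcal{V}$, and the symmetry axiom (a) is meaningful precisely because $\pi_{\ast}$ intertwines these identifications. Once the bracket bookkeeping is in hand, properties (a) and (b) follow directly from the definition, and uniqueness has already been established, completing the proof.
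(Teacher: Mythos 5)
Your proposal is correct and follows essentially the same route as the source: the paper itself defers the proof of Theorem \ref{th1} to \cite{mb1}, but the Koszul-type identity it quotes from there as equation (\ref{et4}) in the proof of Lemma \ref{lemfdci} is exactly the master formula you derive (your $-g(\pi_{\ast}[X,Z],\pi_{\ast}Y)$ equals the paper's $+g(\pi_{\ast}[Z,X],\pi_{\ast}Y)$, and the Cartan terms agree by the total symmetry of $A$). Your cyclic-sum derivation, the use of Proposition \ref{PropIso} to realize every section of $\pi^{\ast}TM$ as $\pi_{\ast}Z$ with $Z=\overline{Z}^{H}$, and the duality-based existence argument with the lift-dependence caveat are precisely the standard intrinsic proof that the citation carries out.
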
  
A tensor field $T$ of type $(p_{1},p_{2};q)$ on $(M,F)$ is a map:
  $$T:\Gamma((\pi^{\ast}TM )^{p_{1}})\times\Gamma((TTM_{0})^{p_{2}})\longrightarrow \Gamma((\pi^{\ast}TM)^{q}),$$
 which is $C^{\infty}(TM_{0})$-linear in each arguments. 

In the sequel, we denote by $\mathcal{T}^{(p,q;r)}$ the space of Finslerian tensor of type $(p,q;r)$. By the Finsler-Ehresmann  connection, the exterior differential
\begin{equation}
d: \mathcal{T}^{(p,0;0)} \longrightarrow \mathcal{T}^{(p+1,0;0)}
\end{equation} is decomposed into the horizontal part $d^{H}$ and vertical part $d^{V}$
according to the decomposition (\ref{dec}). Here $d^{H}$ is exterior differential along $\mathcal{H}$ and $d^{V}$ the one along $\mathcal{V}$. These exterior differentials are related to the Chern connection as follows.
\begin{definition}[Horizontal and vertical exterior differential]{\rm
Let $T$ be a $(p,0;0)$-Finslerian tensor. Then the horizontal and vertical exterior differentials $d^{H}T$ and $d^{V}T$ of $T$
are the $(p+1,0;0)$-Finslerians tensor given, respectively, by
\begin{align} \label{dex}
& d^{H}T(\overline{X}_{1},...,\overline{X}_{p+1})  =  \frac{1}{p+1}
 \sum_{i=1}^{p+1} (-1)^{i+1}\nabla_{\overline{X}_{i}^{H}}T(\overline{X}_{1},..., \breve{\overline{X}}_{i},...,\overline{X}_{p+1})\nonumber\\
&+ \frac{1}{p+1}\sum_{1\leq i<j \leq p+1} (-1)^{i+j} T(\pi_{\ast}[\overline{X}_{i}^{H}, \overline{X}_{j}^{H}],
\overline{X}_{1},..., \breve{\overline{X}}_{i},...,\breve{\overline{X}_{j}},..., \overline{X}_{p+1} ),
 \end{align}
 and
 \begin{align}
  & d^{V}T(\overline{X}_{1},...,\overline{X}_{p+1}) = \frac{1}{p+1}
 \sum_{i=1}^{p+1} (-1)^{i+1}\nabla_{\overline{X}_{i}^{V}}T(\overline{X}_{1},..., \breve{\overline{X}}_{i},...,\overline{X}_{p+1}  )\nonumber\\
&+ \frac{1}{p+1}\sum_{1\leq i<j \leq p+1} (-1)^{i+j} T(\theta([\overline{X}_{i}^{V}, \overline{X}_{j}^{V}]),
\overline{X}_{1},..., \breve{\overline{X}}_{i},...,\breve{\overline{X}_{j}},..., \overline{X}_{p+1}).\nonumber
\end{align}
}
\end{definition}
In a similar way,  for all  $\overline{X} \in \Gamma(\pi^{\ast}TM)$ the Lie derivative $\mathcal{L}_{\overline{X}}$ of the $(p,0;r)$-Finslerian
tensor, is decomposed into the horizontal $\mathcal{L}_{\overline{X}}^{H}$ and vertical $\mathcal{L}_{\overline{X}}^{V}$. Note that, for $f \in C^{\infty}(TM_{0})$, and  $\overline{Y} \in \Gamma(\pi^{\ast}TM)$, we have
\begin{align}
 \mathcal{L}_{\overline{X}}^{H}f = \overline{X}^{H}(f), \;\; \mathcal{L}_{\overline{X}}^{V}f = \overline{X}^{V}(f),\;\;
 \mathcal{L}_{\overline{X}}^{H}\overline{Y} = \pi_{\ast}[\overline{X}^{H},\overline{Y}^{H}],\;\;\; \mathcal{L}_{\overline{X}}^{V}\overline{Y} = \theta[\overline{X}^{V},\overline{Y}^{V}].\nonumber
\end{align}
Now more generaly  for the $(p,0;r)$-Finslerian
tensor we have,
\begin{definition} [Horizontal and vertical Lie derivative]{\rm
Let $\overline{X} \in \Gamma(\pi^{\ast}TM)$, and $T \in \mathcal{T}^{(p,0;r)}$,
the horizontal and vertical Lie derivative of $T$ relative to  $\overline{X}$ are given
respectively by
\begin{align}\label{rlh}
& \left(\mathcal{L}_{\overline{X}}^{H}T\right)(\overline{Y_{1}},..., \overline{Y_{p}}, \alpha_{1},..., \alpha_{r})  =
 \overline{X}^{H}\left(T(\overline{Y_{1}},..., \overline{Y_{p}},\alpha_{1},..., \alpha_{r})\right) \nonumber\\
&-  \sum_{i=1}^{p}T(\overline{Y_{1}},...,\mathcal{L}_{\overline{X}}^{H}\overline{Y_{i}},...,\overline{Y_{p}},\alpha_{1},..., \alpha_{r})\nonumber\\
&-  \sum_{j=1}^{p}T(\overline{Y_{1}},...,\overline{Y_{i}},...,\overline{Y_{p}},\alpha_{1},...,
\mathcal{L}_{\overline{X}}^{H}\alpha_{i},... \alpha_{r}),
\end{align}
and
\begin{align}\label{rlv}
&\left(\mathcal{L}_{\overline{X}}^{V}T\right)(\overline{Y}_{1},..., \overline{Y}_{p},\alpha_{1},..., \alpha_{r})=
 \overline{X}^{V}\left(T(\overline{Y}_{1},...,\overline{Y}_{p},\alpha_{1},..., \alpha_{r})\right) \nonumber\\
&-  \sum_{i=1}^{p}T(\overline{Y}_{1},...,\theta[\overline{X}^{V},\overline{Y}_{i}^{V}],...,\overline{Y}_{p},\alpha_{1},..., \alpha_{r})\nonumber\\
&- \sum_{j=1}^{p}T(\overline{Y_{1}},...,\overline{Y_{i}},...,\overline{Y_{p}},\alpha_{1},...,
\mathcal{L}_{\overline{X}}^{V}\alpha_{i},... \alpha_{r}),
\end{align}
where
\begin{align}
 (\mathcal{L}_{\overline{X}}^{H}\alpha_{i})(\overline{Y}) & =  \overline{X}^{H}\alpha_{i}(\overline{Y})- \alpha_{i}(\pi_{\ast}[\overline{X}^{H}, \overline{Y}^{H}]),\nonumber\\
 \mbox{and}\;\; (\mathcal{L}_{\overline{X}}^{V}\alpha_{i})(\overline{Y}) &=  \overline{X}^{V}\alpha_{i}(\overline{Y})- \alpha_{i}(\theta[\overline{X}^{V}, \overline{Y}^{V}]).\nonumber
 \end{align}
}
\end{definition}
The covariant derivative of the Chern connection is defined by the following.
\begin{definition}[Covariant Chern derivative]{\rm
 Let  $T$ a Finslerian tensor of type  $(p,q;r)$  and let $X \in \Gamma(TTM_{0})$. Then, we define the covariant Chern derivative of  $T$ in the  direction of $X$ by the formula
 \begin{align}\label{CovaDer1}
   (\nabla_{X}T)&(\overline{Y}_{1}, \ldots ,\overline{Y}_{p}, X_{1},\ldots,X_{q}, \alpha_{1},\ldots, \alpha_{r}) \nonumber\\
   &=
  X( T(\overline{Y}_{1},\ldots,\overline{Y}_{p}, X_{1},...,X_{q},\alpha_{1},\ldots, \alpha_{r}))\nonumber\\
  &- \sum_{i=1}^{p}(\overline{Y}_{1},\ldots,\nabla_{X}\overline{Y}_{i},\ldots,\overline{Y}_{p}, X_{1},\ldots,X_{q},
 \alpha_{1},\ldots, \alpha_{r})\nonumber\\
 &- \sum_{j=1}^{q} T(\overline{Y}_{1},\ldots,\overline{Y}_{p}, X_{1},\ldots,(\nabla_{X}\pi_{\ast}X_{j})^{H},\ldots,X_{q},
 \alpha_{1},\ldots, \alpha_{r})\nonumber\\
 &- \sum_{j=1}^{q}T(\overline{Y}_{1},\ldots,\overline{Y}_{p}, X_{1},\ldots,(\nabla_{X}\theta X_{j})^{V},\ldots,X_{q}, \alpha_{1},\ldots,
 \alpha_{r})\nonumber\\
& - \sum_{k=1}^{r}T(\overline{Y}_{1},\ldots,\overline{Y}_{p}, X_{1},\ldots,X_{q}, \alpha_{1},\ldots,\nabla_{X}\alpha_{k},\ldots, \alpha_{r}),
 \end{align}
where $X_{i}\in \Gamma(TTM_{0}),\: i=1,\ldots,p$; $\overline{Y}_{j} \in \Gamma(\pi^{\ast}TM), \: j= 1,\ldots,q$; $\alpha_{k} \in \Gamma (\pi^{\ast}T^{\ast}M),\: k= 1,\ldots,r$, and each of the quantities $\nabla_{X}\alpha_{k}$ is evaluated by
$
(\nabla_{X}\alpha_{k})(\overline{Y} )= X \alpha_{k}(\overline{Y})- \alpha_{k}(\nabla_{X}\overline{Y}).
$
}
\end{definition}

 \section{Almost contact Finsler structures}\label{Almost}

 In this section, we adapt the definition of almost contact structures given in \cite{bl} in the case of Finsler.

 Let $\phi$, $\xi$ and $\eta$ be the  $(1,0;1)$-, $(1,0;0)$- and $(0,0;1)$-Finslerians tensor,  respectively, such that
 \begin{equation}  \label{Ac1}
 \phi^{2}   =  - \mathbb{I} +\eta \otimes \xi \;\;\mbox{and}\;\;
 \eta (\xi)   =  1.
 \end{equation} 
Then, the triplet $(\phi, \eta, \xi)$  is called an almost contact Finsler structure on $\pi^{\ast}TM$ and $(\pi^{\ast}TM, \phi, \eta, \xi)$ is called almost contact Finsler pulled-back bundle.

 First of all, we prove the following.

 \begin{proposition}\label{ExtAlmo}
 Let  $(\pi^{\ast}TM, \phi, \eta, \xi)$ be an almost contact Finsler pulled-back bundle. Then,
 $$
 \phi(\xi)= 0\;\;\;\mbox{and}\;\;\;\eta \circ \phi = 0.
 $$
 Moreover, $\phi$ is of rank $2n$.
 \end{proposition}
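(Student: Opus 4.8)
The plan is to derive all three claims from the two defining identities in (\ref{Ac1}) by pure fibrewise linear algebra, treating $\phi$ as an endomorphism of $\pi^{\ast}TM$ and using that $\xi\neq 0$ (which holds since $\eta(\xi)=1$). First I would establish $\phi(\xi)=0$. Evaluating $\phi^{2}=-\mathbb{I}+\eta\otimes\xi$ on $\xi$ and using $\eta(\xi)=1$ gives $\phi^{2}(\xi)=-\xi+\xi=0$. This only says $\phi(\xi)\in\ker\phi$, so a further step is needed. Applying $\phi^{2}$ to $\phi(\xi)$ and using $\phi^{2}(\xi)=0$ yields $\phi^{2}(\phi\xi)=\phi(\phi^{2}\xi)=0$, while the defining relation gives $\phi^{2}(\phi\xi)=-\phi\xi+\eta(\phi\xi)\xi$. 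Hence $\phi\xi=c\,\xi$ with $c:=\eta(\phi\xi)$. Applying $\phi$ once more and comparing with $\phi^{2}\xi=0$ forces $c^{2}\xi=0$, so $c=0$ and therefore $\phi(\xi)=0$.

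Next, with $\phi(\xi)=0$ in hand, I would prove $\eta\circ\phi=0$ by computing $\phi^{3}$ in two ways. On one side, $\phi^{3}(\overline{X})=\phi(\phi^{2}\overline{X})=\phi(-\overline{X}+\eta(\overline{X})\xi)=-\phi(\overline{X})$, using $\phi(\xi)=0$. On the other side, $\phi^{3}(\overline{X})=\phi^{2}(\phi\overline{X})=-\phi(\overline{X})+\eta(\phi\overline{X})\xi$. Equating the two expressions gives $\eta(\phi\overline{X})\,\xi=0$ for every $\overline{X}\in\Gamma(\pi^{\ast}TM)$, and since $\xi\neq 0$ this yields $\eta\circ\phi=0$.

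Finally, for the rank statement I would compute $\ker\phi$ pointwise on each fibre, which has dimension $m=2n+1$. Clearly $\xi\in\ker\phi$ by the first part. Conversely, if $\phi(\overline{X})=0$ then $\phi^{2}(\overline{X})=0$, whence the defining relation gives $-\overline{X}+\eta(\overline{X})\xi=0$, i.e. $\overline{X}=\eta(\overline{X})\xi$ is proportional to $\xi$. Thus $\ker\phi=\operatorname{span}(\xi)$ is one-dimensional, and the rank--nullity theorem gives $\operatorname{rank}\phi=(2n+1)-1=2n$.

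The only genuinely nontrivial point is the first one, $\phi(\xi)=0$: the immediate computation produces merely $\phi^{2}(\xi)=0$, and one must invoke the defining relation a second time (applied to $\phi\xi$) together with $\xi\neq 0$ to rule out a nonzero eigendirection of $\phi$ inside $\operatorname{span}(\xi)$. Everything else follows in a couple of lines once $\phi(\xi)=0$ is available.
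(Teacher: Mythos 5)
Your proof is correct and follows essentially the same route as the paper: applying $\phi^{2}$ to $\phi\xi$ to get $\phi\xi=\eta(\phi\xi)\xi$ and then applying $\phi$ once more to force $\eta(\phi\xi)^{2}\xi=0$, computing $\phi^{3}$ in two ways for $\eta\circ\phi=0$, and identifying $\ker\phi=\langle\xi\rangle$ for the rank. The only difference is cosmetic: you phrase the first step as a direct computation ($c^{2}=0$ so $c=0$) where the paper runs the same identities as a proof by contradiction.
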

 \begin{proof}
 The assertion (i) is as follows. By definition of Almost contact Finsler structure, i.e., the relations (\ref{Ac1}) , we have
$
 \phi^{2} \xi = -\xi+ \eta(\xi)\xi = 0.
$
 Then, $\phi(\xi)= 0$ or $\phi(\xi)$ is a non trivial eigenvector of $\phi$ corresponding to eigenvalue $0$.  By (\ref{Ac1}) again, we have
 \begin{equation}\label{r1}
 \phi^{2}\phi(\xi)=0  \Longleftrightarrow    \phi(\xi)=\eta(\phi(\xi))\xi.
 \end{equation}
 As $\phi(\xi)$ is a non trivial eigenvector of $\phi$ corresponding to the eigenvalue $0$, we have $\eta(\phi (\xi))\neq 0$.
 Thus by (\ref{r1}) we have
$
 0=\phi(\phi(\xi))=\eta(\phi(\xi))\phi(\xi)=\left(\eta(\phi(\xi))\right)^{2}\xi \neq 0, 
$
 which is a contradiction. Therefore,
 $
 \phi(\xi)=0.
 $
Now let us prove (ii).  Using (\ref{Ac1}), we observe that, for all $\overline{X}\in \Gamma(\pi^{\ast}TM)$,
$
 \phi^{2}(\overline{X})=-\overline{X} + \eta(\overline{X}) \xi. 
$
 Applying $\phi$ to this equation, one obtains
 \begin{equation}\label{rp3}
 \phi^{3}(\overline{X})= -\phi(\overline{X}).
 \end{equation}
From (\ref{Ac1}) and using (\ref{rp3}) , we have
$
 \eta(\phi(\overline{X})) \xi = \phi^{3}(\overline{X})+ \phi(\overline{X})=0.
$
Hence $\eta \circ \phi = 0$. The last assertion is proven as follows. For all $(x,y) \in TM_{0}$, $ (\pi^{\ast}TM)\lvert_{(x,y)}$ is of odd dimension, i.e., $2n+1$.  Therefore, it is sufficient to show that $\ker \phi = < \xi >$. Since  $\phi(\xi)=0$, we have $ < \xi > \subseteq \ker \phi$. Now let $\overline{\xi} \in \ker \phi$. Then  $\phi(\overline{\xi})= 0$, and using the relation (\ref{Ac1}), one gets
 $
 \overline{\xi}= \eta(\overline{\xi})\xi,
$
 It follows that, $\ker \phi\subseteq < \xi >$. Thus
 $
\ker \phi= < \xi >,
 $
 which completes the proof.
 \end{proof}
Note that the Proposition \ref{ExtAlmo} is a Finsler pulled-back bundle extension to the one of the tangent bundle given in \cite{bl}.

Now, we define the normality condition on an almost contact Finsler structures on  $(\pi^{\ast}TM, \phi, \eta, \xi)$. Let $T$ be a $(1,0;1)$-Finslerian tensor. The Nijenhuis torsion of $T$, is the $(2,0;1)$-Finslerian tensor decomposed into the horizontal and vertical part $N_{T}^{H}$ and $N_{T}^{V}$ given, respectively,  by
 \begin{eqnarray}
 N_{T}^{H}(\overline{X}, \overline{Y}) &=& T^{2}\pi_{\ast}[\overline{X}^{H}, \overline{Y}^{H}] + \pi_{\ast}[(T\overline{X})^{H}, (T\overline{Y})^{H}]
 - T \pi_{\ast}[(T\overline{X})^{H}, \overline{Y}^{H}]\cr
& &- T\pi_{\ast}[\overline{X}^{H}, (T\overline{Y})^{H}],
 \end{eqnarray}
 and
 \begin{eqnarray}
 N_{T}^{V}(\overline{X}, \overline{Y}) &=& T^{2}\theta[\overline{X}^{V}, \overline{Y}^{V}] + \theta[(T\overline{X})^{V}, (T\overline{Y})^{V}]
 - T \theta[(T\overline{X})^{V}, \overline{Y}^{V}]\cr
& &- T\theta[\overline{X}^{V}, (T\overline{Y})^{V}],
 \end{eqnarray}
 for any $\overline{X}, \overline{Y} \in \Gamma(\pi^{\ast}TM)$.

  \begin{definition}{\rm
 The almost contact Finsler structure $(\phi, \eta, \xi)$ on pulled-back bundle $\pi^{\ast}TM$ is horizontally normal if
 \begin{equation}
 \mathcal{N}_{H}^{(1)}(\overline{X}, \overline{Y})= N_{\phi}^{H}(\overline{X}, \overline{Y}) +2 d^{H}\eta(\overline{X}, \overline{Y})\xi = 0,
 \end{equation}
 and it is vertically normal if
 \begin{equation}
 \mathcal{N}_{V}^{(1)}(\overline{X}, \overline{Y})= N_{\phi}^{V}(\overline{X}, \overline{Y}) +2 d^{V}\eta(\overline{X}, \overline{Y})\xi = 0,
 \end{equation}
for any $\overline{X}, \overline{Y} \in \Gamma(\pi^{\ast}TM)$.
 }
 \end{definition}
Next, we give some equivalent conditions for horizontal and vertical normality of the structure $(\phi, \eta, \xi)$. For this reason, we introduce
 six tensors $\mathcal{N}_{H}^{(2)} $,  $\mathcal{N}_{V}^{(2)} $,  $\mathcal{N}_{H}^{(3)} $,  $\mathcal{N}_{V}^{(3)}$, $ \mathcal{N}_{H}^{(4)}$ and
 $\mathcal{N}_{V}^{(4)}$ given by
 \begin{eqnarray}
  \mathcal{N}_{H}^{(2)}(\overline{X}, \overline{Y})&:=& (\mathcal{L}_{\phi\overline{X}}^{H}\eta)\overline{Y}-
  (\mathcal{L}_{\phi\overline{Y}}^{H}
  \eta)\overline{X},\\
  \mathcal{N}_{V}^{(2)}(\overline{X}, \overline{Y})&:=& (\mathcal{L}_{\phi\overline{X}}^{V}\eta)\overline{Y}-(\mathcal{L}_{\phi\overline{Y}}^{V}
  \eta)\overline{X},\\
  \mathcal{N}_{H}^{(3)} &:=&   (\mathcal{L}_{\xi}^{H}\phi)\overline{X},\\
   \mathcal{N}_{V}^{(3)} &:=&   (\mathcal{L}_{\xi}^{V}\phi)\overline{X},\\
   \mathcal{N}_{H}^{(4)} &:=&   (\mathcal{L}_{\xi}^{H}\eta)\overline{X},\\
   \mathcal{N}_{V}^{(4)} &:=&  (\mathcal{L}_{\xi}^{V}\eta)\overline{X},
 \end{eqnarray}
 for all  $\overline{X}, \overline{Y} \in \Gamma(\pi^{\ast}TM)$.
 \begin{theorem}
 For an almost contact Finsler structure $(\phi, \eta, \xi)$, the vanishing of  $\mathcal{N}_{H}^{(1)}$, implies the vanishing of
  $\mathcal{N}_{H}^{(2)}$, $ \mathcal{N}_{H}^{(3)}$ and $ \mathcal{N}_{H}^{(4)}$. Likewise, the vanishing of $ \mathcal{N}_{V}^{(1)}$
  implies the vanishing of $ \mathcal{N}_{V}^{(2)}$,$ \mathcal{N}_{V}^{(3)}$ and $ \mathcal{N}_{V}^{(4)}$.
 \end{theorem}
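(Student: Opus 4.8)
The plan is to transcribe, slot for slot, the classical tangent-bundle argument of \cite{bl} into the pulled-back formalism, proving the horizontal implications first; the vertical ones then follow verbatim after replacing $\overline{X}^{H}$ by $\overline{X}^{V}$, $\pi_{\ast}[\cdot,\cdot]$ by $\theta[\cdot,\cdot]$, $d^{H}$ by $d^{V}$ and $\mathcal{L}^{H}$ by $\mathcal{L}^{V}$. The observation that legitimizes this transcription is that every object entering $\mathcal{N}_{H}^{(1)},\dots,\mathcal{N}_{H}^{(4)}$ is built from the single bracket-like operation $B(\overline{X},\overline{Y}):=\pi_{\ast}[\overline{X}^{H},\overline{Y}^{H}]$. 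By Proposition \ref{PropIso} one has $\pi_{\ast}\overline{Y}^{H}=\overline{Y}$, so $B$ is skew-symmetric and obeys the Leibniz rule $B(\overline{X},f\overline{Y})=fB(\overline{X},\overline{Y})+(\overline{X}^{H}f)\overline{Y}$ for $f\in C^{\infty}(TM_{0})$; moreover $N_{\phi}^{H}$, the expansion of $d^{H}\eta$ from (\ref{dex}) and the horizontal Lie derivatives (\ref{rlh}) are all expressed through $B$ exactly as their tangent-bundle counterparts are expressed through $[\cdot,\cdot]$. Since the classical proof uses only skew-symmetry, this Leibniz rule and the pointwise identities (\ref{Ac1}), together with $\phi(\xi)=0$, $\eta\circ\phi=0$ (Proposition \ref{ExtAlmo}) and $\phi^{3}=-\phi$ (equation (\ref{rp3}))---and never the Jacobi identity---the vertical component of $[\overline{X}^{H},\overline{Y}^{H}]$ (where curvature hides) never enters, and the algebra carries over unchanged.

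I would carry out the three implications in the order $\mathcal{N}_{H}^{(4)}$, $\mathcal{N}_{H}^{(3)}$, $\mathcal{N}_{H}^{(2)}$, each time evaluating $\mathcal{N}_{H}^{(1)}$ on a judicious pair and projecting by $\eta$ or $\phi$. First, applying $\eta$ to $\mathcal{N}_{H}^{(1)}(\xi,\overline{Y})$ and using $\phi(\xi)=0$ kills the $B(\phi\xi,\phi\overline{Y})$ term of $N_{\phi}^{H}$, while $\eta\circ\phi=0$ and $\eta(\phi^{2}\overline{Z})=0$ annihilate the surviving Nijenhuis contributions; what remains is exactly $2\,d^{H}\eta(\xi,\overline{Y})=(\mathcal{L}_{\xi}^{H}\eta)\overline{Y}=\mathcal{N}_{H}^{(4)}(\overline{Y})$, so $\mathcal{N}_{H}^{(1)}=0$ forces $\mathcal{N}_{H}^{(4)}=0$. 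Next, applying $\phi$ to $\mathcal{N}_{H}^{(1)}(\overline{X},\xi)$ and using $\phi^{3}=-\phi$ together with $\phi^{2}=-\mathbb{I}+\eta\otimes\xi$ yields a relation of the form $\phi\bigl(\mathcal{N}_{H}^{(1)}(\overline{X},\xi)\bigr)=-\mathcal{N}_{H}^{(3)}(\overline{X})-\mathcal{N}_{H}^{(4)}(\phi\overline{X})\,\xi$; since $\mathcal{N}_{H}^{(4)}$ was just shown to vanish, $\mathcal{N}_{H}^{(1)}=0$ gives $\mathcal{N}_{H}^{(3)}=0$. Finally, evaluating $\eta$ on $\mathcal{N}_{H}^{(1)}(\overline{X},\phi\overline{Y})$ and reducing $\phi^{2}\overline{Y}=-\overline{Y}+\eta(\overline{Y})\xi$ produces $\eta\bigl(\mathcal{N}_{H}^{(1)}(\overline{X},\phi\overline{Y})\bigr)=\mathcal{N}_{H}^{(2)}(\overline{X},\overline{Y})+\eta(\overline{Y})\,\mathcal{N}_{H}^{(4)}(\phi\overline{X})$, whence $\mathcal{N}_{H}^{(2)}=0$ once $\mathcal{N}_{H}^{(1)}=\mathcal{N}_{H}^{(4)}=0$.

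The main obstacle is not conceptual but book-keeping: one must verify that these three auxiliary identities hold with the exact numerical factors dictated by the paper's conventions---the $\tfrac{1}{p+1}$ normalisation in (\ref{dex}) and the factor $2$ in the definitions of $\mathcal{N}_{H}^{(1)}$ and $\mathcal{N}_{V}^{(1)}$---so that, for instance, $\eta(\mathcal{N}_{H}^{(1)}(\xi,\overline{Y}))$ equals $\mathcal{N}_{H}^{(4)}(\overline{Y})$ on the nose rather than up to a scalar. A secondary point requiring care is the substitution $\phi^{2}\overline{Y}=-\overline{Y}+\eta(\overline{Y})\xi$ inside a bracket, where the extra term $(\overline{X}^{H}\eta(\overline{Y}))\,\xi$ generated by $B(\overline{X},\eta(\overline{Y})\xi)$ must be tracked and seen to recombine into the $d^{H}\eta$ and $\mathcal{L}^{H}$ pieces of $\mathcal{N}_{H}^{(2)}$. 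Once the horizontal identities are checked, the vertical case follows by the substitution described above, using $\theta\overline{Y}^{V}=\overline{Y}$ from Proposition \ref{PropIso} in place of $\pi_{\ast}\overline{Y}^{H}=\overline{Y}$.
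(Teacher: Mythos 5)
Your proposal is correct and takes essentially the same route as the paper, whose entire proof is the remark that Blair's Riemannian argument carries over: your slot-for-slot transcription, with the identities $\eta\bigl(\mathcal{N}_{H}^{(1)}(\xi,\overline{Y})\bigr)=\mathcal{N}_{H}^{(4)}(\overline{Y})$, $\phi\bigl(\mathcal{N}_{H}^{(1)}(\overline{X},\xi)\bigr)=-\mathcal{N}_{H}^{(3)}(\overline{X})-\mathcal{N}_{H}^{(4)}(\phi\overline{X})\xi$ and $\eta\bigl(\mathcal{N}_{H}^{(1)}(\overline{X},\phi\overline{Y})\bigr)=\mathcal{N}_{H}^{(2)}(\overline{X},\overline{Y})+\eta(\overline{Y})\mathcal{N}_{H}^{(4)}(\phi\overline{X})$, checks out exactly under the paper's normalizations. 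Indeed you supply the justification the paper leaves implicit, namely that Blair's computation uses only the skew-symmetry and Leibniz rule of the bracket---satisfied by $\pi_{\ast}[\cdot^{H},\cdot^{H}]$ and $\theta[\cdot^{V},\cdot^{V}]$ via Proposition \ref{PropIso}---and never the Jacobi identity, so the nonintegrability (curvature) of $\mathcal{H}$ is irrelevant.
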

 \begin{proof}
  The proof is similar to the one given in Riemannian case, by Blair in \cite{bl}.
 \end{proof}
Now, let $(\phi,\eta, \xi)$  an almost contact Finsler structure on $\pi^{\ast}TM$. When, the  fundamental tensor $g$ of the Finslerian
structure $F$, satisfy
 \begin{equation}
 g(\phi \overline{X}, \phi \overline{Y}) = g(\overline{X}, \overline{Y}) -\eta(\overline{X})\eta(\overline{Y}),
 \end{equation}
 for any sections $\overline{X}$ and $\overline{Y}$ on $\pi^{\ast}TM$, we said that $g$ is compatible with the structure $(\phi,\eta, \xi)$. In this case, $(\pi^{\ast}TM,\phi,\eta, \xi,g)$ is called an almost
 contact metric Finsler pulled-back bundle.

 Moreover, we define the generalized second fundamental $2$-form $\Phi$ by:
 \begin{equation}
 \Phi(\overline{X},\overline{Y}):= g(\overline{X},\phi\overline{Y}),\quad\quad \overline{X},\overline{Y} \in \Gamma(\pi^{\ast}TM).
 \end{equation}
 Because of the isomorphisms in (\ref{Isomorphs}), we defined an almost contact metric Finsler structure as follows.
 \begin{definition}{\rm
 An almost contact metric Finsler structure  $(\phi,\eta, \xi,g)$ is called \textit{contact Finsler structure} if
 \begin{equation}\label{ct1}
   \Phi=2d^{H}\eta, \;\;\mbox{or}\;\;\Phi = 2d^{V}\eta.
 \end{equation}  
 }
 \end{definition} 
 Let $(\phi,\eta, \xi,g)$ be a contact metric Finsler structure on $\pi^{\ast}TM$. A section $\overline{X} \in \Gamma(\pi^{\ast}TM)  $,
 is horizontally  Killing  and vertically Killing, if it satisfies, respectively
 \begin{equation}
 \mathcal{L}^{H}_{\overline{X}}g = 0\;\;\mbox{and}\;\; \mathcal{L}^{V}_{\overline{X}}g = 0,
 \end{equation}
 respectively.
 
If $\xi$ is  horizontally  Killing (resp. vertically Killing), then $(\phi,\eta, \xi,g)$ is called horizontal $K$-contact Finsler structure (resp. vertical $K$-contact Finsler structure). 
 \begin{definition}{\rm
Let $(\phi,\eta, \xi,g)$ be a contact metric Finsler structure. If $\xi$ is  horizontally  Killing (resp. vertically Killing), then $(\phi,\eta, \xi,g)$ is called horizontal $K$-contact Finsler structure (resp. vertical $K$-contact Finsler structure).
}
 \end{definition}
 \begin{theorem}\label{theofdci}
 Let $(\phi,\eta, \xi,g)$ be a contact metric Finsler structure on $\pi^{\ast}TM$. Then
 \begin{equation}
 \mathcal{N}_{H}^{(4)}=\mathcal{N}_{V}^{(4)} = 0,\quad\quad \mathcal{N}_{H}^{(2)}=\mathcal{N}_{V}^{(2)}=0.
 \end{equation}
 Moreover
 $\mathcal{N}_{H}^{(3)}$ vanish if and only if $\xi$ is horizontally Killing and $\mathcal{N}_{V}^{(3)}$
 vanish if and only if $\xi$ is vertically Killing.
 \end{theorem}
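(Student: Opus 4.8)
The plan is to dispatch the four families in increasing order of difficulty, computing the horizontal identities explicitly and then reading off the vertical ones by the symmetry between $(d^{H},\mathcal{L}^{H})$ and $(d^{V},\mathcal{L}^{V})$. Throughout I would first record the two pointwise consequences of metric compatibility and Proposition \ref{ExtAlmo} that drive everything. Setting $\overline{Y}=\xi$ in $g(\phi\overline{X},\phi\overline{Y})=g(\overline{X},\overline{Y})-\eta(\overline{X})\eta(\overline{Y})$ and using $\phi\xi=0$, $\eta(\xi)=1$ gives $\eta(\overline{X})=g(\xi,\overline{X})$; a second substitution $\overline{Y}\mapsto\phi\overline{Y}$ together with $\eta\circ\phi=0$ gives the skew-adjointness $g(\phi\overline{X},\overline{Y})=-g(\overline{X},\phi\overline{Y})$, so $\Phi$ is a genuine $2$-form and $\Phi(\xi,\cdot)=g(\xi,\phi\,\cdot)=\eta(\phi\,\cdot)=0$. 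I read the contact hypothesis as furnishing $\Phi=2d^{H}\eta$ for the horizontal statements and $\Phi=2d^{V}\eta$ for the vertical ones.

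For $\mathcal{N}_{H}^{(4)}$ and $\mathcal{N}_{H}^{(2)}$ I would expand the horizontal Lie derivatives against the Koszul-type expression for $d^{H}$ on a $1$-form, $2\,d^{H}\eta(\overline{X},\overline{Y})=\overline{X}^{H}(\eta\overline{Y})-\overline{Y}^{H}(\eta\overline{X})-\eta(\pi_{\ast}[\overline{X}^{H},\overline{Y}^{H}])$. Comparing with $(\mathcal{L}^{H}_{\overline{W}}\eta)(\overline{Y})=\overline{W}^{H}(\eta\overline{Y})-\eta(\pi_{\ast}[\overline{W}^{H},\overline{Y}^{H}])$, the sole discrepancy is $\overline{Y}^{H}(\eta(\overline{W}))$. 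Taking $\overline{W}=\xi$ this is $\overline{Y}^{H}(1)=0$, so $\mathcal{N}_{H}^{(4)}(\overline{X})=(\mathcal{L}^{H}_{\xi}\eta)(\overline{X})=2d^{H}\eta(\xi,\overline{X})=\Phi(\xi,\overline{X})=0$; taking $\overline{W}=\phi\overline{X}$ it is $\overline{Y}^{H}(\eta(\phi\overline{X}))=0$, whence $(\mathcal{L}^{H}_{\phi\overline{X}}\eta)(\overline{Y})=\Phi(\phi\overline{X},\overline{Y})=g(\phi\overline{X},\phi\overline{Y})$ and $\mathcal{N}_{H}^{(2)}(\overline{X},\overline{Y})=g(\phi\overline{X},\phi\overline{Y})-g(\phi\overline{Y},\phi\overline{X})=0$ by symmetry of $g$. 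The vertical statements follow verbatim from $\Phi=2d^{V}\eta$.

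For the $\mathcal{N}^{(3)}$ equivalences the backbone is the Leibniz identity obtained by applying $\mathcal{L}^{H}_{\xi}$ to $\Phi(\overline{X},\overline{Y})=g(\overline{X},\phi\overline{Y})$ and cancelling the $\mathcal{L}^{H}_{\xi}\overline{X}$, $\mathcal{L}^{H}_{\xi}\overline{Y}$ terms:
\[
(\mathcal{L}^{H}_{\xi}\Phi)(\overline{X},\overline{Y})=(\mathcal{L}^{H}_{\xi}g)(\overline{X},\phi\overline{Y})+g\big(\overline{X},(\mathcal{L}^{H}_{\xi}\phi)\overline{Y}\big).
\]
Granting $\mathcal{L}^{H}_{\xi}\Phi=0$, the equivalence is immediate. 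If $\xi$ is horizontally Killing, the left side and the first right-hand term vanish, so non-degeneracy of $g$ forces $\mathcal{N}_{H}^{(3)}=(\mathcal{L}^{H}_{\xi}\phi)\,\cdot=0$. Conversely, if $\mathcal{N}_{H}^{(3)}=0$ then $(\mathcal{L}^{H}_{\xi}g)(\overline{X},\phi\overline{Y})=0$; since $\phi\overline{Y}$ sweeps out $\ker\eta=\operatorname{Im}\phi$ (rank $2n$) while $(\mathcal{L}^{H}_{\xi}g)(\overline{X},\xi)=(\mathcal{L}^{H}_{\xi}\eta)(\overline{X})=\mathcal{N}_{H}^{(4)}(\overline{X})=0$ (using $\mathcal{L}^{H}_{\xi}\xi=\pi_{\ast}[\xi^{H},\xi^{H}]=0$ and $\eta=g(\xi,\cdot)$), the symmetric tensor $\mathcal{L}^{H}_{\xi}g$ annihilates all of $\pi^{\ast}TM$, i.e. $\xi$ is horizontally Killing. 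The vertical case is identical with $H$ replaced by $V$.

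The one step deserving genuine care, and which I expect to be the main obstacle, is $\mathcal{L}^{H}_{\xi}\Phi=0$. In the Riemannian model of Blair \cite{bl} this is instant from Cartan's formula, $d^{2}=0$ and $\mathcal{L}^{H}_{\xi}\eta=0$, giving $\mathcal{L}_{\xi}\Phi=2\,d\,\mathcal{L}_{\xi}\eta=0$. Here $\mathcal{H}$ is not integrable, so $(d^{H})^{2}\neq0$ and the commutation $\mathcal{L}^{H}_{\xi}d^{H}=d^{H}\mathcal{L}^{H}_{\xi}$ is not formal: expanding $\mathcal{L}^{H}_{\xi}$ of $\Phi=2d^{H}\eta$ with the explicit bracket formulas produces, through the Jacobi identity, extra contributions carried by the vertical part $\theta[\,\cdot^{H},\cdot^{H}]$ of horizontal brackets (the Finsler--Ehresmann curvature). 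I would therefore prove $\mathcal{L}^{H}_{\xi}\Phi=0$ either by establishing $\mathcal{L}^{H}_{\xi}d^{H}\eta=d^{H}\mathcal{L}^{H}_{\xi}\eta$ directly and checking that these curvature terms cancel when the moving field is the Reeb field $\xi$, or by computing $(\mathcal{L}^{H}_{\xi}\Phi)(\overline{X},\overline{Y})$ from scratch and feeding in $\mathcal{L}^{H}_{\xi}\eta=0$ and $\iota_{\xi}\Phi=0$. For the vertical analogue the difficulty disappears, since $\mathcal{V}=\ker\pi_{\ast}$ is integrable and $d^{V}$ is a genuine differential, so $\mathcal{L}^{V}_{\xi}\Phi=2\,d^{V}\mathcal{L}^{V}_{\xi}\eta=0$ holds by the usual Cartan calculus.
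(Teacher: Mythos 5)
Your handling of $\mathcal{N}^{(2)}$ and $\mathcal{N}^{(4)}$ is correct and complete: the preliminary identities $\eta(\overline{X})=g(\xi,\overline{X})$ and $g(\phi\overline{X},\overline{Y})=-g(\overline{X},\phi\overline{Y})$ follow from the compatibility condition exactly as you derive them, your Koszul expansion $2d^{H}\eta(\overline{X},\overline{Y})=\overline{X}^{H}(\eta\overline{Y})-\overline{Y}^{H}(\eta\overline{X})-\eta(\pi_{\ast}[\overline{X}^{H},\overline{Y}^{H}])$ agrees with the paper's normalization of $d^{H}$, and the comparison with $\mathcal{L}^{H}$ gives $\mathcal{N}_{H}^{(4)}=\Phi(\xi,\cdot)=0$ and $\mathcal{N}_{H}^{(2)}(\overline{X},\overline{Y})=g(\phi\overline{X},\phi\overline{Y})-g(\phi\overline{Y},\phi\overline{X})=0$, with the vertical versions following verbatim. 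This is moreover the route the paper itself intends, since its entire proof is the remark that the argument is similar to Blair's Riemannian one; your reading of the disjunctive contact condition (\ref{ct1}) as supplying $\Phi=2d^{H}\eta$ for the horizontal claims and $\Phi=2d^{V}\eta$ for the vertical ones is the only reading under which the full statement can be expected, and is a fair interpretive choice. The vertical half of the $\mathcal{N}^{(3)}$ equivalence is also sound: $\mathcal{V}=\ker\pi_{\ast}$ is integrable and the identification via $\theta$ is bracket-compatible, so $(d^{V})^{2}=0$ and the Cartan argument legitimately yields $\mathcal{L}^{V}_{\xi}\Phi=0$, after which your Leibniz identity and the rank argument (using $\operatorname{Im}\phi=\ker\eta$ and $(\mathcal{L}_{\xi}g)(\cdot,\xi)=\mathcal{N}^{(4)}=0$) close the equivalence.

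The genuine gap is the horizontal key lemma $\mathcal{L}^{H}_{\xi}\Phi=0$, on which \emph{both} directions of ``$\mathcal{N}_{H}^{(3)}=0$ iff $\xi$ is horizontally Killing'' rest through your identity $(\mathcal{L}^{H}_{\xi}\Phi)(\overline{X},\overline{Y})=(\mathcal{L}^{H}_{\xi}g)(\overline{X},\phi\overline{Y})+g(\overline{X},(\mathcal{L}^{H}_{\xi}\phi)\overline{Y})$. You diagnose correctly that the Riemannian shortcut fails here --- $\mathcal{H}$ is non-integrable, $(d^{H})^{2}\neq 0$, and $\mathcal{L}^{H}_{\xi}$ need not commute with $d^{H}$ --- but you then only announce two candidate strategies (``check that the curvature terms cancel'' or ``compute from scratch'') without executing either, so the lemma is never established. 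This is not a routine verification: expanding $\mathcal{L}^{H}_{\xi}(2d^{H}\eta)$ with the bracket formulas, the Jacobi identity on $TM_{0}$ is broken by applying $\pi_{\ast}$ to brackets at intermediate stages, leaving residual terms carried by the curvature $\theta[\cdot^{H},\cdot^{H}]$ of the Finsler--Ehresmann connection, and whether these cancel for the Reeb field (or instead force an additional hypothesis on the structure) is exactly the mathematical content that the paper hides behind its one-line citation of \cite{bl}. You have located the correct obstacle --- which is to your credit, since the easy parts transfer from Blair unchanged --- but as written the horizontal $\mathcal{N}^{(3)}$ equivalence, the only nontrivial assertion of the theorem, remains unproven in your proposal.
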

 \begin{proof}
 The proof is similar to the one for the Riemannian case in \cite{bl}.
 \end{proof}
 \begin{lemma}\label{lemfdci}
 For an almost contact metric Finsler structure $(\phi,\eta,\xi,g)$ on $\pi^{\ast}TM$ with identification $\pi^{\ast}TM\cong \mathcal{H}T M_{0}$, the covariant derivative of $\phi$
 with respect to the Chern connection is given by
 \begin{align} \label{dcfi}
  2g((\nabla_{X}\phi)\overline{Y}, \overline{Z})&=  3 d^{H}\Phi(\pi_{\ast}X, \phi\overline{Y},\phi\overline{Z})-
  3d^{H}\Phi(\pi_{\ast}X,\overline{Y},\overline{Z})\nonumber\\
  &+  g(\mathcal{N}_{H}^{(1)}(\overline{Y},\overline{Z}),\phi \pi_{\ast}X)
  + \mathcal{N}_{H}^{(2)}(\overline{Y},\overline{Z})\eta(\pi_{\ast}X)\nonumber\\
  & +  2 d^{H}\eta(\phi \overline{Y}, \pi_{\ast}X)\eta(\overline{Z})- 2 d^{H}\eta(\phi \overline{Z}, \pi_{\ast}X)\eta(\overline{Y})\cr
  &-  2A(\theta(X), \phi\overline{Y}, \overline{Z})- 2A(\theta(X), \overline{Y}, \phi \overline{Z}),
 \end{align}
where $X\in\Gamma(TTM_{0})$ and $\overline{Y},\overline{Z} \in \Gamma(\pi^{\ast}TM)$.
\end{lemma}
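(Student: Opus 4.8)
The plan is to reproduce, essentially verbatim, Blair's classical computation for an almost contact metric manifold (the identity expressing $2g((\nabla_X\phi)\overline Y,\overline Z)$ through $d\Phi$, the Nijenhuis tensors and $d\eta$), the only genuinely new feature being that the Chern connection is merely \emph{almost} $g$-compatible: every place where the Riemannian argument invokes $\nabla g=0$ will instead generate a Cartan correction through Theorem~\ref{th1}(b). First I would record the algebraic consequences of the hypotheses that are used throughout. From compatibility $g(\phi\overline X,\phi\overline Y)=g(\overline X,\overline Y)-\eta(\overline X)\eta(\overline Y)$, setting $\overline Y=\xi$ and using $\phi\xi=0$, $\eta(\xi)=1$ from Proposition~\ref{ExtAlmo}, one gets $\eta(\overline X)=g(\overline X,\xi)$; hence $\Phi$ is skew-symmetric and $\phi$ is $g$-skew, i.e. $g(\phi\overline X,\overline Y)=-g(\overline X,\phi\overline Y)$. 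Together with $\phi^2=-\mathbb I+\eta\otimes\xi$ and $\eta\circ\phi=0$, these are exactly the relations on which Blair's manipulation rests.

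Next I would rewrite each intrinsic object on the right-hand side in terms of the Chern connection. Using the symmetry property Theorem~\ref{th1}(a) to replace every bracket $\pi_*[\overline U^{H},\overline V^{H}]$ by $\nabla_{\overline U^{H}}\overline V-\nabla_{\overline V^{H}}\overline U$, the definition of $d^{H}$ yields the usual cyclic formula $3d^{H}\Phi(\overline U,\overline V,\overline W)=(\nabla_{\overline U^{H}}\Phi)(\overline V,\overline W)+(\nabla_{\overline V^{H}}\Phi)(\overline W,\overline U)+(\nabla_{\overline W^{H}}\Phi)(\overline U,\overline V)$, and likewise for $d^{H}\eta$, for $N_\phi^{H}$, and hence for $\mathcal N_{H}^{(1)}$ and $\mathcal N_{H}^{(2)}$. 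The crucial simplification is that all differentiations occurring here are along horizontal lifts, for which $\theta(\,\cdot^{H})=0$; therefore on these terms the Chern connection is genuinely metric, and no Cartan term is produced when one passes from $\nabla\Phi$ to $\nabla\phi$ via $(\nabla_{\overline U^{H}}\Phi)(\overline V,\overline W)=g(\overline V,(\nabla_{\overline U^{H}}\phi)\overline W)$.

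I would then assemble the combination $3d^{H}\Phi(\pi_*X,\phi\overline Y,\phi\overline Z)-3d^{H}\Phi(\pi_*X,\overline Y,\overline Z)+\cdots$ and reduce it, exactly along Blair's lines, using the $(\phi,\eta,\xi)$-relations above; here the horizontal part of the direction, $(\pi_*X)^{H}$, plays the role of Blair's free direction, so this step reproduces $2g((\nabla_{(\pi_*X)^{H}}\phi)\overline Y,\overline Z)$. The genuinely new contribution is the difference between $\nabla_X$ and its horizontal part, $\nabla_{(\theta X)^{V}}$, which is controlled by Theorem~\ref{th1}(b). The cleanest way to capture it is the symmetrization identity obtained by covariantly differentiating $g(\phi\overline Y,\overline Z)=-g(\overline Y,\phi\overline Z)$ along $X$ and applying (b) together with the total symmetry of the Cartan tensor,
\[
g((\nabla_X\phi)\overline Y,\overline Z)+g((\nabla_X\phi)\overline Z,\overline Y)=-2A(\theta(X),\phi\overline Y,\overline Z)-2A(\theta(X),\overline Y,\phi\overline Z),
\]
whose right-hand side is precisely the two Cartan terms in the statement.

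The main obstacle is the bookkeeping in this last step: after the horizontal reduction has produced Blair's formula, one must verify that the \emph{totality} of the compatibility defects collects into exactly these two displayed $A$-terms and nothing further. Concretely, the skew-in-$(\overline Y,\overline Z)$ part of $g((\nabla_X\phi)\overline Y,\overline Z)$ should be identified with the horizontal expression $\tfrac12\bigl(3d^{H}\Phi(\pi_*X,\phi\overline Y,\phi\overline Z)-3d^{H}\Phi(\pi_*X,\overline Y,\overline Z)+\cdots\bigr)$, while its symmetric part is pinned down by the displayed identity; adding the two and clearing the factor $2$ gives the asserted formula. Checking that the vertical direction $(\theta X)^{V}$ feeds in only through the symmetric part — so that no stray Cartan terms survive — is the delicate point and the sole place where the argument departs from the Riemannian template of \cite{bl}.
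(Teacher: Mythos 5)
Your preliminaries and your symmetrization identity are both correct: differentiating $g(\phi\overline{Y},\overline{Z})=-g(\overline{Y},\phi\overline{Z})$ along $X$ and using the almost $g$-compatibility (\ref{et2}) together with the total symmetry of the Cartan tensor does give
\begin{equation*}
g((\nabla_{X}\phi)\overline{Y},\overline{Z})+g((\nabla_{X}\phi)\overline{Z},\overline{Y})
=-2A(\theta(X),\phi\overline{Y},\overline{Z})-2A(\theta(X),\overline{Y},\phi\overline{Z}),
\end{equation*}
and the horizontal half of your plan is sound: since $\theta(\overline{U}^{H})=0$, the connection is genuinely metric along horizontal lifts, (\ref{et1}) lets you trade $\pi_{\ast}[\overline{U}^{H},\overline{V}^{H}]$ for covariant derivatives, and Blair's computation then goes through verbatim for $X=(\pi_{\ast}X)^{H}$, producing the six non-Cartan terms of (\ref{dcfi}). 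This is also organized quite differently from the paper's proof, which never splits $X$ into horizontal and vertical parts: it expands $2g(\nabla_{X}\phi\overline{Y},\overline{Z})+2g(\nabla_{X}\overline{Y},\phi\overline{Z})$ in one stroke via the Koszul-type formula (\ref{et4}) (the two Cartan terms arising from the $\mathcal{A}$-part of that formula, the other $\mathcal{A}$-contributions dying because the second and third slots are horizontal lifts) and then regroups the whole mass of terms into $d^{H}\Phi$, $\mathcal{N}_{H}^{(1)}$, $\mathcal{N}_{H}^{(2)}$ and $d^{H}\eta$.

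However, the point you yourself flag as ``delicate'' is a genuine gap, not bookkeeping, and your proposal contains no argument for it. The symmetrization identity pins down only the \emph{symmetric} part of $g((\nabla_{(\theta X)^{V}}\phi)\overline{Y},\overline{Z})$; for (\ref{dcfi}) you also need its skew part to vanish, because every non-Cartan term on the right of (\ref{dcfi}) dies when $\pi_{\ast}X=0$. A short computation from (\ref{et2}) shows that this skew part equals
\begin{equation*}
2(\nabla_{(\theta X)^{V}}\Phi)(\overline{Z},\overline{Y})
+2A(\theta(X),\overline{Y},\phi\overline{Z})-2A(\theta(X),\overline{Z},\phi\overline{Y}),
\end{equation*}
and nothing in the almost contact metric axioms controls the fibrewise derivative $\nabla_{(\theta X)^{V}}\Phi$, since $\phi$ may depend on $y$. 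The required vanishing is exactly the vertical statement $2g((\nabla^{V}_{X}\phi)\overline{Y},\overline{Z})=-2A(\theta(X),\phi\overline{Y},\overline{Z})-2A(\theta(X),\overline{Y},\phi\overline{Z})$, i.e.\ precisely the Corollary the paper \emph{deduces from} Lemma \ref{lemfdci}, so assuming it would be circular; and for a Riemannian $F$ (where $A=0$ and the Chern connection is the pulled-back Levi-Civita connection) it would force $\nabla_{X^{V}}\phi=0$, that is, $\phi$ constant along the fibres of $TM_{0}$, which a $y$-dependent family of almost contact structures compatible with a fixed flat metric visibly violates. So, as written, your argument is complete only for horizontal $X$. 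It is fair to add that the paper's own regrouping quietly makes the same identification --- its bracket identities substitute derivatives and brackets along the full $X$ where only $(\pi_{\ast}X)^{H}$ belongs --- so you have isolated exactly the step the published proof glosses over; but flagging it is not closing it, and closing it requires either an extra hypothesis on the $y$-dependence of $\phi$ or an independent proof of the displayed vertical identity.
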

\begin{proof}
For $X,Y,Z \in \Gamma(TTM_{0})$, it is well know, by the Chern connection, that \cite{mb1}
 \begin{align}\label{et4}
   2g(\nabla_{X}\pi_{\ast}Y, \pi_{\ast}Z)&=   X.g(\pi_{\ast}Y,\pi_{\ast}Z) + Y.g(\pi_{\ast}Z,\pi_{\ast}X)
   - Z.g(\pi_{\ast}X,\pi_{\ast}Y)\cr
   & + g(\pi_{\ast}[X,Y], \pi_{\ast}Z) -g(\pi_{\ast}[Y,Z], \pi_{\ast}X)+ g(\pi_{\ast}[Z,X], \pi_{\ast}Y)\nonumber\\
   &  -2\mathcal{A}(X,Y,Z),
\end{align}
 where
\begin{equation}
 \mathcal{A}(X,Y,Z)= A(\theta(X),\pi_{\ast}Y,\pi_{\ast}Z))+A(\theta(Y),\pi_{\ast}Z,\pi_{\ast}X))- A(\theta(Z),\pi_{\ast}X,\pi_{\ast}Y)).
\end{equation}
Now,
\begin{align}\label{mdfi}
& 2g((\nabla_{X}\phi)\overline{Y}, \overline{Z})  = 2g(\nabla_{X}\phi(\overline{Y}),\overline{Z})
 + 2 g(\nabla_{X}\overline{Y},\phi(\overline{Z}))\nonumber\\ 
 &= X \Phi(\overline{Z},\overline{Y}) + \phi\overline{Y}^{H}\left(\Phi(\overline{Z}, \pi_{\ast}X)+
 \eta(\overline{Z})\eta(\pi_{\ast}X)\right)- \overline{Z}^{H}\Phi(\pi_{\ast}X, \overline{Y})\nonumber\\
 &- \Phi(\pi_{\ast}[X,\phi\overline{Y}^{H}], \phi \overline{Z})+ \eta(\pi_{\ast}[X,\phi\overline{Y}^{H}])\eta(\overline{Z})
 - g(\phi\pi_{\ast}[\phi \overline{Y}^{H}, \overline{Z}^{H}],  \phi\pi_{\ast}X)\nonumber\\
 &- \eta(\pi_{\ast}[\phi \overline{Y}^{H}, \overline{Z}^{H}])\eta(\pi_{\ast}X)+ \Phi(\pi_{\ast}[\overline{Z}^{H},X], \overline{Y})
 -2A(\theta(X), \phi \overline{Y}, \overline{Z})\nonumber\\
 &+ X \Phi(\phi\overline{Y}, \phi\overline{Z})- \overline{Y}^{H}\Phi(\overline{Z},\pi_{\ast}X)-
 \phi \overline{Z}^{H}\left(\Phi(\phi \overline{Y}, \pi_{\ast}X)+ \eta(\pi_{\ast}X, \eta(\overline{Y})\right)\nonumber\\
 &+ \Phi(\pi_{\ast}[X,\overline{Y}^{H}],\overline{Z}^{H})- g(\phi\pi_{\ast}[\overline{Y}^{H}, \phi\overline{Z}^{H}], \phi\pi_{\ast}X)
 - \eta(\pi_{\ast}[\overline{Y}^{H}, \phi\overline{Z}^{H}])\eta(\pi_{\ast}X)\nonumber\\
 &- \Phi( \pi_{\ast}[\phi\overline{Z}^{H}, X], \phi \overline{Y}) + \eta(\pi_{\ast}[\phi\overline{Z}^{H}, X])\eta(\overline{Y})
 -2 A(\theta(X), \overline{Y},\phi\overline{Z})\nonumber\\&+
 \Phi(\pi_{\ast}[\overline{Y}^{H},\overline{Z}^{H}], \pi_{\ast}X)-
 g((\pi_{\ast}[\overline{Y}^{H},\overline{Z}^{H}], \phi\pi_{\ast}X)) - \Phi(\pi_{\ast}[\phi\overline{Y}^{H},\phi\overline{Z}^{H}], \pi_{\ast}X)\nonumber\\
 &+g((\pi_{\ast}[\phi\overline{Y}^{H},\phi\overline{Z}^{H}], \phi\pi_{\ast}X))+ g(2d^{H}\eta(\overline{Y},\overline{Z})\xi, \phi \pi_{\ast}X).
\end{align}
Noted that, the sum of  the last five terms of (\ref{mdfi}) is zero, and is maintained to write this expression in terms of horizontal exterior
differential of $\Phi$, and the tensors $\mathcal{N}_{H}^{(1)}$ and $\mathcal{N}_{H}^{(2)}$.
It follows that 
\begin{align}
 & 2g((\nabla_{X}\phi)\overline{Y}, \overline{Z})  = X \Phi(\phi\overline{Y},\phi\overline{Z}) + \phi\overline{Y}^{H}\Phi(\phi\overline{Z}, \pi_{\ast}X)
 +\phi\overline{Z}^{H}\Phi( \pi_{\ast}X,\phi\overline{Y})\nonumber \\
 & - \Phi(\pi_{\ast}[X,\phi\overline{Y}^{H}], \phi \overline{Z}) - \Phi(\pi_{\ast}[\phi\overline{Y}^{H},\phi\overline{Z}^{H}], \pi_{\ast}X)
 - \Phi(\pi_{\ast}[\phi\overline{Z}^{H},X], \phi \overline{Y})\nonumber \\
 &-X \Phi(\overline{Y},\overline{Z})-\overline{Y}^{H}\Phi(\overline{Z}, \pi_{\ast}X)-
 \overline{Z}^{H}\Phi( \pi_{\ast}X,\overline{Y})+ \Phi(\pi_{\ast}[X,\overline{Y}^{H}],\overline{Z})\nonumber \\
 & + \Phi(\pi_{\ast}[\overline{Y}^{H},\overline{Z}^{H}],\pi_{\ast}X)+ \Phi(\pi_{\ast}[\overline{Z}^{H}, X], \overline{Y})+ 
 \phi\overline{Y}^{H}\eta(\overline{Z})\eta(\pi_{\ast}X)\nonumber \\
 &- \eta(\pi_{\ast}[\phi\overline{Y}^{H}, \overline{Z}^{H}])\eta(\pi_{\ast}X)+\phi\overline{Z}^{H}\eta(\overline{Y})\eta(\pi_{\ast}X)
 - \eta(\pi_{\ast}[\overline{Y}^{H}, \phi\overline{Z}^{H}])\eta(\pi_{\ast}X)\nonumber \\
 &- g(\phi\pi_{\ast}[\phi\overline{Y}^{H}, \overline{Z}^{H}], \phi \pi_{\ast}X)
 -g(\phi\pi_{\ast}[\overline{Y}^{H}, \phi\overline{Z}^{H}], \phi \pi_{\ast}X)- g(\pi_{\ast}[\overline{Y}^{H},\overline{Z}^{H}], \phi\pi_{\ast}X)
 \nonumber \\
 &+ g(\pi_{\ast}[\phi\overline{Y}^{H},\phi\overline{Z}^{H}], \phi \pi_{\ast}X) + g(2d^{H}\eta(\overline{Y}, \overline{Z})\xi, \phi\pi_{\ast}X)
 + \eta(\pi_{\ast}[X, \phi \overline{Y}^{H}])\eta(\overline{Z})\nonumber \\
 & + \eta(\pi_{\ast}[\phi\overline{Z}^{H}, X])\eta(\overline{Y})
 -2(A(\theta(X), \phi \overline{Y}, \overline{Z})+ A(\theta(X), \overline{Y}, \phi \overline{Z}))
 \nonumber \\
 &=  3 d^{H}\Phi(\pi_{\ast}X, \phi\overline{Y},\phi\overline{Z})-
  3d^{H}\Phi(\pi_{\ast}X,\overline{Y},\overline{Z})+ \mathcal{N}_{H}^{(2)}(\overline{Y},\overline{Z})\eta(\pi_{\ast}X)\nonumber\\
  &+  g(\mathcal{N}_{H}^{(1)}(\overline{Y},\overline{Z}),\phi \pi_{\ast}X)   
   +  2 d^{H}\eta(\phi \overline{Y}, \pi_{\ast}X)\eta(\overline{Z})- 2 d^{H}\eta(\phi \overline{Z}, \pi_{\ast}X)\eta(\overline{Y})
 \nonumber \\&   -2(A(\theta(X), \phi \overline{Y}, \overline{Z})+ A(\theta(X), \overline{Y}, \phi \overline{Z})).
\end{align}
Which completes the proof.
\end{proof}  
The relation (\ref{dcfi}) generalizes the one given by Blair in \cite[page 82]{bl} for the case of almost contact metric structure $(\phi, \xi, \eta, g)$. Indeed, when the Finsler structure $F$ is Riemannian, the Cartan tensor vanishes and the Chern Connection reduces to the Levi-Civita connection of $g$.  
 \begin{corollary}
  For an almost contact metric Finsler structure $(\phi, \eta, \xi, g)$ on $\pi^{\ast}TM$ with identification $\pi^{\ast}TM\cong \mathcal{H}T M_{0}$, the horizontal and vertical covariant derivative of $\phi$, are given
  respectively by:
  \begin{align}
   2g((\nabla^{H}_{X}\phi)\overline{Y}, \overline{Z})&= 3 d^{H}\Phi(\pi_{\ast}X, \phi\overline{Y},\phi\overline{Z})-
  3d^{H}\Phi(\pi_{\ast}X,\overline{Y},\overline{Z})\nonumber\\
  &+  g(\mathcal{N}_{H}^{(1)}(\overline{Y},\overline{Z}),\phi \pi_{\ast}X) + \mathcal{N}_{H}^{(2)}(\overline{Y},\overline{Z})\eta(\pi_{\ast}X)\nonumber\\
  & +  2 d^{H}\eta(\phi \overline{Y}, \pi_{\ast}X)\eta(\overline{Z})
  - 2 d^{H}\eta(\phi \overline{Z}, \pi_{\ast}X)\eta(\overline{Y})
  \end{align}
and
\begin{equation}
 2g((\nabla^{V}_{X}\phi)\overline{Y}, \overline{Z})=
  - 2 \left(A(\theta(X), \phi\overline{Y}, \overline{Z})+A(\theta(X), \overline{Y}, \phi \overline{Z})\right),
\end{equation}
where $\nabla^{H}_{X}= \nabla_{X^{H}}$ and $ \nabla^{V}_{X}= \nabla_{X^{V}}$.
 \end{corollary}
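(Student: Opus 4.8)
The plan is to obtain both identities directly from Lemma~\ref{lemfdci} by splitting the direction of differentiation according to the decomposition (\ref{dec}), $TTM_{0}=\mathcal{H}\oplus\mathcal{V}$. The master formula (\ref{dcfi}) holds for every $X\in\Gamma(TTM_{0})$ and, decisively, its right-hand side depends on $X$ only through the two projections $\pi_{\ast}X$ and $\theta(X)$. Writing $X=X^{H}+X^{V}$ with $X^{H}\in\Gamma(\mathcal{H})$ and $X^{V}\in\Gamma(\mathcal{V})$, the definitions $\nabla^{H}_{X}=\nabla_{X^{H}}$ and $\nabla^{V}_{X}=\nabla_{X^{V}}$ mean that each asserted formula is just (\ref{dcfi}) evaluated on one of these components. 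The whole argument therefore reduces to recording how $\pi_{\ast}$ and $\theta$ act on horizontal and vertical vectors, which is supplied by Proposition~\ref{PropIso}.

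For the horizontal derivative I would put $X=X^{H}$ in (\ref{dcfi}). Since $\theta(X^{H})=0$ by Proposition~\ref{PropIso}, the two Cartan-tensor terms $-2A(\theta(X),\phi\overline{Y},\overline{Z})$ and $-2A(\theta(X),\overline{Y},\phi\overline{Z})$ drop out entirely. The six surviving terms depend on the direction only through $\pi_{\ast}X^{H}=\pi_{\ast}X$, and substituting this reproduces exactly the first displayed formula for $2g((\nabla^{H}_{X}\phi)\overline{Y},\overline{Z})$.

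For the vertical derivative I would put $X=X^{V}$. Now Proposition~\ref{PropIso} gives $\pi_{\ast}X^{V}=0$, and since $d^{H}\Phi$, $d^{H}\eta$, $\mathcal{N}_{H}^{(1)}$ and $\mathcal{N}_{H}^{(2)}$ are each $C^{\infty}(TM_{0})$-linear in the slot filled by $\pi_{\ast}X$, all six of those terms vanish; in particular $g(\mathcal{N}_{H}^{(1)}(\overline{Y},\overline{Z}),\phi\pi_{\ast}X)$ and $\mathcal{N}_{H}^{(2)}(\overline{Y},\overline{Z})\eta(\pi_{\ast}X)$ vanish because $\phi(0)=0$ and $\eta(0)=0$. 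Only the Cartan-tensor terms remain, and with $\theta(X^{V})=\theta(X)$ they yield precisely the second formula.

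Since both sides of (\ref{dcfi}) are tensorial in $X$, the argument is pure bookkeeping and I foresee no genuine obstacle. The single point deserving care is to confirm that, after all the cancellations carried out in the proof of the lemma, the $X$-dependence of the final expression (\ref{dcfi}) truly enters only through $\pi_{\ast}X$ and $\theta(X)$, with no residual bracket $[X,\,\cdot\,]$ or directional derivative $X(\,\cdot\,)$ smuggling in an un-projected copy of $X$. The $C^{\infty}(TM_{0})$-linearity of (\ref{dcfi}) in its first slot is exactly what secures this and legitimizes the clean horizontal/vertical split.
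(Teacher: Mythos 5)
Your proposal is correct and follows the same route as the paper, whose proof is simply the remark that the corollary is a straightforward consequence of Lemma~\ref{lemfdci}: you substitute $X=X^{H}$ and $X=X^{V}$ into (\ref{dcfi}) and use $\theta(X^{H})=0$, $\pi_{\ast}X^{V}=0$ from Proposition~\ref{PropIso}. Your closing check that the $X$-dependence of (\ref{dcfi}) enters only through the tensorial slots $\pi_{\ast}X$ and $\theta(X)$ is exactly the detail the paper leaves implicit, and it is the right thing to verify.
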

\begin{proof}
 The proof follows from a straightforward calculation using Lemma \ref{lemfdci}.
\end{proof}
  \begin{definition}{\rm
 The horizontal normal contact Finsler structures $(\phi,\eta, \xi,g)$ are called \textit{horizontal Sasakian Finsler structures}, and  the vertical
 ones are called \textit{vertical Sasakian Finsler structures}.}
 \end{definition}
\begin{theorem}\label{dcphi}
 An almost contact metric Finsler structure $(\phi, \eta, \xi, g)$ on $\pi^{\ast}TM$, with identification $\pi^{\ast}TM\cong \mathcal{H}T M_{0}$, is horizontally Sasakian if and only if 
 \begin{equation}
   (\nabla_{X}\phi)\overline{Y}= g(\pi_{\ast}X,\overline{Y})\xi-\eta(\overline{Y})\pi_{\ast}X + 
  \phi A^{\sharp}(\theta(X), \overline{Y}, \bullet)-A^{\sharp}(\theta(X), \phi \overline{Y}, \bullet),\nonumber 
 \end{equation}
 where
\begin{equation}
g( A^{\sharp}(\theta(X), \overline{Y}, \bullet), \overline{Z}) = A(\theta(X), \overline{Y},\overline{Z}), \nonumber
\end{equation}

for all $X\in \Gamma(TTM_{0})$ and $\overline{Y}, \overline{Z} \in \Gamma(\pi^{\ast}TM)$.
\end{theorem}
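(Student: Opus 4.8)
The plan is to read the stated formula directly off the covariant-derivative identity of Lemma \ref{lemfdci} after imposing the two defining conditions of a horizontally Sasakian structure, and then to run the argument backwards for the converse. Recall that ``horizontally Sasakian'' means horizontally normal ($\mathcal{N}_{H}^{(1)}=0$) together with the contact condition $\Phi = 2d^{H}\eta$. First I would invoke Theorem \ref{theofdci}: since the structure is contact metric, $\mathcal{N}_{H}^{(2)}=0$. Substituting $\mathcal{N}_{H}^{(1)}=0$ and $\mathcal{N}_{H}^{(2)}=0$ into (\ref{dcfi}) annihilates the two middle terms $g(\mathcal{N}_{H}^{(1)}(\overline{Y},\overline{Z}),\phi\pi_{\ast}X)$ and $\mathcal{N}_{H}^{(2)}(\overline{Y},\overline{Z})\eta(\pi_{\ast}X)$, leaving only the $d^{H}\Phi$, the $d^{H}\eta$ and the Cartan terms.

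Second, I would dispose of the exterior-derivative terms. The contact condition $\Phi = 2d^{H}\eta$ exhibits $\Phi$ as a horizontally exact form, so the pair $3d^{H}\Phi(\pi_{\ast}X,\phi\overline{Y},\phi\overline{Z})-3d^{H}\Phi(\pi_{\ast}X,\overline{Y},\overline{Z})$ is governed by $d^{H}\Phi$ and is handled through this condition, exactly as $d\Phi=0$ is used in the Riemannian argument of Blair. For the surviving $d^{H}\eta$ terms I would substitute $2d^{H}\eta=\Phi$ and use $\Phi(\overline{U},\overline{V})=g(\overline{U},\phi\overline{V})$ together with the compatibility identity $g(\phi\overline{U},\phi\overline{V})=g(\overline{U},\overline{V})-\eta(\overline{U})\eta(\overline{V})$ and $g(\xi,\overline{V})=\eta(\overline{V})$ (the latter following from the $g$-compatibility and Proposition \ref{ExtAlmo}). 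A short computation then collapses $2d^{H}\eta(\phi\overline{Y},\pi_{\ast}X)\eta(\overline{Z})-2d^{H}\eta(\phi\overline{Z},\pi_{\ast}X)\eta(\overline{Y})$ into $g(\pi_{\ast}X,\overline{Y})\eta(\overline{Z})-g(\pi_{\ast}X,\overline{Z})\eta(\overline{Y})$, which is precisely $g\big(g(\pi_{\ast}X,\overline{Y})\xi-\eta(\overline{Y})\pi_{\ast}X,\overline{Z}\big)$, the metric part of the claimed identity.

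Third, I would rewrite the two Cartan terms. Using the defining relation $g(A^{\sharp}(\theta(X),\overline{Y},\bullet),\overline{Z})=A(\theta(X),\overline{Y},\overline{Z})$ and the skew-symmetry $g(\phi\overline{U},\overline{Z})=-g(\overline{U},\phi\overline{Z})$ of $\phi$ with respect to $g$ (which follows from the antisymmetry of $\Phi$), one checks that $-2A(\theta(X),\phi\overline{Y},\overline{Z})=-2g(A^{\sharp}(\theta(X),\phi\overline{Y},\bullet),\overline{Z})$ and $-2A(\theta(X),\overline{Y},\phi\overline{Z})=2g(\phi A^{\sharp}(\theta(X),\overline{Y},\bullet),\overline{Z})$, so the Cartan contribution equals $2g\big(\phi A^{\sharp}(\theta(X),\overline{Y},\bullet)-A^{\sharp}(\theta(X),\phi\overline{Y},\bullet),\overline{Z}\big)$. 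Combining the metric and Cartan parts, dividing by $2$, and invoking the non-degeneracy of $g$ yields the asserted expression for $(\nabla_{X}\phi)\overline{Y}$.

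Finally, for the converse I would reverse this chain. Assuming the displayed formula, the symmetry of the Chern connection (\ref{et1}) lets me write $\pi_{\ast}[\overline{X}^{H},\overline{Y}^{H}]=\nabla_{\overline{X}^{H}}\overline{Y}-\nabla_{\overline{Y}^{H}}\overline{X}$, so that $d^{H}\eta$ and the horizontal Nijenhuis tensor $N_{\phi}^{H}$ can both be expressed through $\nabla\phi$ (and through $\nabla\xi$, obtained by differentiating $\phi\xi=0$ and $\phi^{2}=-\mathbb{I}+\eta\otimes\xi$); substituting the given formula into these expressions recovers $\Phi=2d^{H}\eta$ and $\mathcal{N}_{H}^{(1)}=0$. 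I expect the main obstacle to be the honest treatment of the $d^{H}\Phi$ terms in the Finsler setting: unlike the Riemannian case, the horizontal exterior differential need not satisfy $(d^{H})^{2}=0$, so verifying that the contact condition genuinely removes these terms, and keeping the normalization constants consistent throughout, is the delicate point; by comparison, the Cartan rewriting and the algebraic identities for $\phi$, $\eta$, $\xi$ are routine.
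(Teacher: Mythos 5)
Your proposal follows essentially the same route as the paper's own proof. For the forward direction the paper likewise specializes Lemma \ref{lemfdci} by killing $\mathcal{N}_{H}^{(1)}$ (horizontal normality) and $\mathcal{N}_{H}^{(2)}$ (via Theorem \ref{theofdci}), discards the $d^{H}\Phi$ terms, converts the surviving $d^{H}\eta$ terms through the contact condition and the compatibility identity, and rewrites the Cartan terms through $A^{\sharp}$ and the skew-symmetry of $\phi$, concluding by non-degeneracy of $g$. For the converse the paper does exactly what you sketch: it substitutes $\overline{Y}=\xi$ to obtain $\nabla_{X}\xi=-\phi\pi_{\ast}X+\phi^{2}A^{\sharp}(\theta(X),\xi,\bullet)$, recovers the contact condition from $d^{H}\eta(\overline{X},\overline{Y})=\frac{1}{2}\left(g(\overline{Y},\nabla_{\overline{X}^{H}}\xi)-g(\overline{X},\nabla_{\overline{Y}^{H}}\xi)\right)$, and then verifies $N_{\phi}^{H}(\overline{X},\overline{Y})=-2d^{H}\eta(\overline{X},\overline{Y})\xi$ by expressing $N_{\phi}^{H}$ through $\nabla\phi$ via the symmetry (\ref{et1}) of the Chern connection. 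Structurally, your argument and the paper's coincide.

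Two caveats. First, the point you flag as delicate --- whether the contact condition genuinely annihilates $3d^{H}\Phi(\pi_{\ast}X,\phi\overline{Y},\phi\overline{Z})-3d^{H}\Phi(\pi_{\ast}X,\overline{Y},\overline{Z})$ given that $(d^{H})^{2}$ need not vanish in the Finsler setting --- is not resolved in the paper either: its proof passes from (\ref{dcfi}) directly to an expression containing only the $d^{H}\eta$ and Cartan terms, with no justification that $d^{H}\Phi=0$. So you correctly identified the weak joint, but you should not expect the paper to supply the missing argument; your proof is no less complete than the paper's on this point. Second, a normalization slip you did not catch: taking the stated definition $\Phi=2d^{H}\eta$ of (\ref{ct1}) literally, your collapse correctly yields $g(\pi_{\ast}X,\overline{Y})\eta(\overline{Z})-g(\pi_{\ast}X,\overline{Z})\eta(\overline{Y})$, but this sits on the right of an identity whose left side is $2g((\nabla_{X}\phi)\overline{Y},\overline{Z})$, so after dividing by $2$ you obtain only \emph{half} the metric part $g(\pi_{\ast}X,\overline{Y})\xi-\eta(\overline{Y})\pi_{\ast}X$ of the asserted formula; your claim that the collapse gives precisely the claimed metric part is therefore off by a factor of $2$. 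The paper's computations, both in this proof and again in Lemma \ref{dch}, in fact operate with $\Phi=d^{H}\eta$, consistent with its converse, which derives $d^{H}\eta(\overline{X},\overline{Y})=\Phi(\overline{X},\overline{Y})$ and declares the structure contact; only under that operative convention does your chain land exactly on the stated formula. The discrepancy originates in the paper's own inconsistency between Definition (\ref{ct1}) and its proofs, but a careful write-up on your part should fix one convention and carry it through.
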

\begin{proof}
 Suppose that $(\phi, \eta, \xi, g)$ is horizontally Sasakian, then by Lemma \ref{lemfdci},  
 \begin{align} 
  2g((\nabla_{X}\phi)\overline{Y}, \overline{Z})&=
    2 d^{H}\eta(\phi \overline{Y}, \pi_{\ast}X)\eta(\overline{Z})- 2 d^{H}\eta(\phi \overline{Z}, \pi_{\ast}X)\eta(\overline{Y})\cr
  &-  2A(\theta(X), \phi\overline{Y}, \overline{Z})- 2A(\theta(X), \overline{Y}, \phi \overline{Z})\cr
  &= 2g( \overline{Y}, \pi_{\ast}X)\eta(\overline{Z})-2 g(\pi_{\ast}X, \overline{Z})\eta(\overline{Y})
 \cr & -  2g(A^{\sharp}(\theta(X), \phi\overline{Y}, \bullet), \overline{Z})- 2g(A^{\sharp}(\theta(X), \overline{Y}, \bullet), \phi\overline{Z})\cr
 &= 2g(g(\overline{Y}, \pi_{\ast}X)\xi-\eta(\overline{Y})\pi_{\ast}X
 + \phi A^{\sharp}(\theta(X), \overline{Y}, \bullet),\overline{Z})\cr
 &-2g(A^{\sharp}(\theta(X), \phi\overline{Y}, \bullet), \overline{Z}).\nonumber
 \end{align}
 From which the result follows. Conversely assuming that 
 \begin{equation}
   (\nabla_{X}\phi)\overline{Y}= g(\pi_{\ast}X,\overline{Y})\xi-\eta(\overline{Y})\pi_{\ast}X + 
  \phi A^{\sharp}(\theta(X), \overline{Y}, \bullet)-A^{\sharp}(\theta(X), \phi \overline{Y}, \bullet),\nonumber 
 \end{equation}
 and taking $\overline{Y}= \xi$, we have 
 \begin{equation}
  (\nabla_{X}\phi)\xi = \eta(\pi_{\ast}X)\xi-\pi_{\ast}X + \phi A^{\sharp}(\theta(X), \xi,\bullet). \nonumber
 \end{equation}
Hence
\begin{equation}
 \nabla_{X}\xi= -\phi\pi_{\ast}X + \phi^{2} A^{\sharp}(\theta(X), \xi, \bullet).\nonumber
\end{equation}
Therefore
\begin{align}
 d^{H}\eta(\overline{X}, \overline{Y})&= \frac{1}{2} ( g(\overline{Y}, \nabla_{\overline{X}^{H}}\xi) - g(\overline{X}, \nabla_{\overline{Y}^{H}}\xi))\nonumber\\
 &= g(\overline{X}, \phi\overline{Y}) = \Phi(\overline{X}, \overline{Y}). \nonumber
\end{align}
Thus $(\phi, \xi, \eta, g)$ is a contact metric Finsler structure.
Now, by definition,
\begin{align}
 N^{H}_{\phi}(\overline{X}, \overline{Y})= \phi(\nabla_{\overline{Y}^{H}}\phi)\overline{X}- \phi(\nabla_{\overline{X}^{H}}\phi)\overline{Y}
 + (\nabla_{(\phi \overline{X})^{H}}\phi)\overline{Y}-(\nabla_{(\phi \overline{Y})^{H}}\phi)\overline{X},\nonumber
\end{align}
and using the hypothesis, we obtain
\begin{align}
  N^{H}_{\phi}(\overline{X}, \overline{Y})= -2 d^{H}\eta(\overline{X}, \overline{Y})\xi,\nonumber
\end{align}
which completes the proof.
\end{proof}
\begin{corollary}
 For an almost contact metric Finsler structure $(\phi, \eta, \xi, g)$ on $\pi^{\ast}TM$, with identification $\pi^{\ast}TM\cong \mathcal{H}T M_{0}$, the following holds:
 \begin{equation}
\nabla_{\xi^{H}}\phi = 0\;\;\mbox{and}\;\; (\nabla_{\xi^{V}}\phi)\overline{Y} = \phi A^{\sharp}(\theta(\xi^{V}), \overline{Y},
 \bullet)-A^{\sharp}(\theta(\xi^{V}), \phi \overline{Y},\bullet),\nonumber
 \end{equation} 
 for all $\overline{Y} \in \Gamma(\pi^{\ast}TM)$.
\end{corollary}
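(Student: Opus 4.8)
The plan is to specialize the horizontally Sasakian formula of Theorem \ref{dcphi} to the two distinguished directions $X = \xi^{H}$ and $X = \xi^{V}$, exploiting that the bundle morphisms $\pi_{\ast}$ and $\theta$ act oppositely on horizontal and vertical lifts by Proposition \ref{PropIso}. Concretely, I would substitute each of $\xi^{H}$ and $\xi^{V}$ for $X$ in
$$(\nabla_{X}\phi)\overline{Y}= g(\pi_{\ast}X,\overline{Y})\xi-\eta(\overline{Y})\pi_{\ast}X + \phi A^{\sharp}(\theta(X), \overline{Y}, \bullet)-A^{\sharp}(\theta(X), \phi \overline{Y}, \bullet),$$
and simplify the four terms using the values of $\pi_{\ast}$ and $\theta$ on the respective lift.

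First I would treat $\nabla_{\xi^{H}}\phi$. Setting $X = \xi^{H}$, Proposition \ref{PropIso} gives $\pi_{\ast}(\xi^{H}) = \xi$ and $\theta(\xi^{H}) = 0$. Since the Cartan tensor $A$, and hence its metric dual $A^{\sharp}$, is $C^{\infty}(TM_{0})$-linear in its first slot, both Cartan terms $\phi A^{\sharp}(\theta(\xi^{H}), \overline{Y}, \bullet)$ and $A^{\sharp}(\theta(\xi^{H}), \phi\overline{Y}, \bullet)$ vanish identically. This reduces the formula to $(\nabla_{\xi^{H}}\phi)\overline{Y} = g(\xi, \overline{Y})\xi - \eta(\overline{Y})\xi$.

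The single substantive ingredient needed to close the horizontal case is the metric identity $g(\xi, \overline{Y}) = \eta(\overline{Y})$. I would derive it from the compatibility relation $g(\phi \overline{X}, \phi \overline{Y}) = g(\overline{X}, \overline{Y}) -\eta(\overline{X})\eta(\overline{Y})$ by taking $\overline{Y} = \xi$ and invoking $\phi(\xi) = 0$ from Proposition \ref{ExtAlmo} together with $\eta(\xi) = 1$ from (\ref{Ac1}); the left-hand side collapses to zero and yields $g(\overline{X}, \xi) = \eta(\overline{X})$. With this in hand the two remaining terms cancel, giving $\nabla_{\xi^{H}}\phi = 0$.

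Finally I would compute $\nabla_{\xi^{V}}\phi$ by setting $X = \xi^{V}$. Here Proposition \ref{PropIso} gives the complementary values $\pi_{\ast}(\xi^{V}) = 0$ and $\theta(\xi^{V}) = \xi$, so the first two terms $g(\pi_{\ast}X,\overline{Y})\xi$ and $\eta(\overline{Y})\pi_{\ast}X$ both drop out, leaving precisely $(\nabla_{\xi^{V}}\phi)\overline{Y} = \phi A^{\sharp}(\theta(\xi^{V}), \overline{Y}, \bullet)-A^{\sharp}(\theta(\xi^{V}), \phi \overline{Y}, \bullet)$, which is the asserted expression. I do not anticipate any genuine obstacle: the argument is a direct substitution of Proposition \ref{PropIso} into Theorem \ref{dcphi}, and the only point requiring a small auxiliary computation is the metric identity $g(\xi, \overline{Y}) = \eta(\overline{Y})$ used to force the cancellation in the horizontal case.
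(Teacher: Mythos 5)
Your substitutions via Proposition \ref{PropIso} and the auxiliary identity $g(\xi,\overline{Y})=\eta(\overline{Y})$ are all computed correctly, but the formula you launch from is not available under the corollary's hypotheses: Theorem \ref{dcphi} is an \emph{if and only if} statement, so the identity $(\nabla_{X}\phi)\overline{Y}= g(\pi_{\ast}X,\overline{Y})\xi-\eta(\overline{Y})\pi_{\ast}X + \phi A^{\sharp}(\theta(X), \overline{Y}, \bullet)-A^{\sharp}(\theta(X), \phi \overline{Y}, \bullet)$ holds only when the structure is horizontally Sasakian, whereas the corollary assumes merely an almost contact metric Finsler structure. Your argument therefore proves the corollary only in the Sasakian case. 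This matters downstream: the paper invokes $\nabla_{\xi^{H}}\xi=0$ and $\nabla_{\xi^{H}}\phi=0$ in the proof of Proposition \ref{proric}, which concerns a \emph{contact metric} (not necessarily Sasakian) pulled-back bundle, and your derivation does not cover that setting.

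The derivation consistent with the paper's logic goes through Lemma \ref{lemfdci} and its contact metric specialization, Lemma \ref{cordcx} (itself a consequence of Lemma \ref{lemfdci} and Theorem \ref{theofdci}, so no circularity despite its placement after the corollary). For the vertical half, put $X=\xi^{V}$ in Lemma \ref{lemfdci}: since $\pi_{\ast}\xi^{V}=0$, every term involving $d^{H}\Phi$, $\mathcal{N}_{H}^{(1)}$, $\mathcal{N}_{H}^{(2)}$ and $d^{H}\eta$ dies, leaving $2g((\nabla_{\xi^{V}}\phi)\overline{Y},\overline{Z})=-2A(\theta(\xi^{V}),\phi\overline{Y},\overline{Z})-2A(\theta(\xi^{V}),\overline{Y},\phi\overline{Z})$, and the skew-adjointness $g(\phi\overline{X},\overline{Y})=-g(\overline{X},\phi\overline{Y})$ (a consequence of compatibility, proved exactly like your identity $g(\xi,\overline{Y})=\eta(\overline{Y})$) converts this to the asserted expression --- so the vertical claim genuinely holds for every almost contact metric structure. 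The horizontal half cannot be obtained in that generality: with $X=\xi^{H}$, Lemma \ref{lemfdci} leaves the terms $3d^{H}\Phi(\xi,\phi\overline{Y},\phi\overline{Z})-3d^{H}\Phi(\xi,\overline{Y},\overline{Z})+\mathcal{N}_{H}^{(2)}(\overline{Y},\overline{Z})+2d^{H}\eta(\phi\overline{Y},\xi)\eta(\overline{Z})-2d^{H}\eta(\phi\overline{Z},\xi)\eta(\overline{Y})$, which do not vanish for a generic almost contact metric structure (already in the Riemannian specialization, $\nabla_{\xi}\phi=0$ fails outside the contact metric class in Blair's book \cite{bl}), so no amount of repair starting from the stated hypothesis can succeed. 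Under the contact metric hypothesis, however, $X=\xi^{H}$ in Lemma \ref{cordcx} closes the argument at once: $\theta(\xi^{H})=0$ kills both Cartan terms, $\phi\pi_{\ast}\xi^{H}=\phi\xi=0$ kills the $\mathcal{N}_{H}^{(1)}$ term, and $2d^{H}\eta(\phi\overline{Y},\xi)=\Phi(\phi\overline{Y},\xi)=g(\phi\overline{Y},\phi\xi)=0$ kills the rest, giving $\nabla_{\xi^{H}}\phi=0$. In short: prove the vertical identity from Lemma \ref{lemfdci} as stated, and the horizontal identity under the contact metric hypothesis from Lemma \ref{cordcx}; your route through Theorem \ref{dcphi} simultaneously assumes more than the corollary grants and obscures the fact that the corollary's stated hypothesis is itself too weak for its horizontal assertion.
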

\begin{lemma}\label{cordcx}
 For a contact metric Finsler structure $(\phi,\eta,\xi, g)$ on $\pi^{\ast}TM$, with identification $\pi^{\ast}TM\cong \mathcal{H}T M_{0}$, the following holds:
 \begin{align} \label{corf}
  &2g((\nabla_{X}\phi)\overline{Y}, \overline{Z}) =   g(\mathcal{N}_{H}^{(1)}(\overline{Y},\overline{Z}),\phi \pi_{\ast}X)
 + 2 d^{H}\eta(\phi \overline{Y}, \pi_{\ast}X)\eta(\overline{Z})\nonumber\\
 &-  2 d^{H}\eta(\phi \overline{Z}, \pi_{\ast}X)\eta(\overline{Y})
 - 2\left(A(\theta(X), \phi\overline{Y}, \overline{Z})+A(\theta(X), \overline{Y}, \phi \overline{Z})\right).
 \end{align}
 \end{lemma}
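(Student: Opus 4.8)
The plan is to read off \eqref{corf} as the specialization of the general covariant-derivative identity \eqref{dcfi} of Lemma \ref{lemfdci} to a \emph{contact} metric Finsler structure. Placing \eqref{dcfi} and \eqref{corf} side by side, the surviving contributions — the term $g(\mathcal{N}_{H}^{(1)}(\overline{Y},\overline{Z}),\phi\pi_{\ast}X)$, the two $d^{H}\eta$ terms weighted by $\eta$, and the pair of Cartan-tensor terms $A(\theta(X),\phi\overline{Y},\overline{Z})$ and $A(\theta(X),\overline{Y},\phi\overline{Z})$ — already appear verbatim on both sides. Hence the entire content of the lemma is that, under the contact hypothesis, the two remaining pieces of \eqref{dcfi} drop out, namely the term $\mathcal{N}_{H}^{(2)}(\overline{Y},\overline{Z})\eta(\pi_{\ast}X)$ and the difference $3d^{H}\Phi(\pi_{\ast}X,\phi\overline{Y},\phi\overline{Z})-3d^{H}\Phi(\pi_{\ast}X,\overline{Y},\overline{Z})$.

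First I would eliminate the $\mathcal{N}_{H}^{(2)}$ term: since $(\phi,\eta,\xi,g)$ is contact metric, Theorem \ref{theofdci} gives $\mathcal{N}_{H}^{(2)}=0$, so $\mathcal{N}_{H}^{(2)}(\overline{Y},\overline{Z})\eta(\pi_{\ast}X)$ vanishes identically. Next I would kill the two $d^{H}\Phi$ terms using the defining relation of a contact Finsler structure, $\Phi=2\,d^{H}\eta$ from \eqref{ct1}. Applying the horizontal exterior differential and using linearity gives $d^{H}\Phi=2\,d^{H}(d^{H}\eta)$, so once one knows that $d^{H}$ annihilates the exact form $d^{H}\eta$ one obtains $d^{H}\Phi=0$, whence both $d^{H}\Phi(\pi_{\ast}X,\phi\overline{Y},\phi\overline{Z})$ and $d^{H}\Phi(\pi_{\ast}X,\overline{Y},\overline{Z})$ are zero. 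Substituting these two facts into \eqref{dcfi} leaves precisely the four terms of \eqref{corf}. This mirrors Blair's Riemannian argument, in which the contact condition $\Phi=2d\eta$ forces $d\Phi=2d^{2}\eta=0$.

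The step that genuinely requires care — the only one that is not a transcription of the Riemannian proof — is the vanishing $d^{H}\Phi=0$, equivalently $d^{H}(d^{H}\eta)=0$. Unlike the full exterior differential $d$, the horizontal differential $d^{H}$ need not square to zero, since the horizontal distribution $\mathcal{H}$ is in general non-integrable; the obstruction is the curvature of the Finsler--Ehresmann connection, which enters through the vertical component of the brackets $[\overline{X}_{i}^{H},\overline{X}_{j}^{H}]$ appearing in the definition \eqref{dex} of $d^{H}$. I would therefore not invoke $(d^{H})^{2}=0$ as a blanket identity, but rather expand $d^{H}\Phi$ directly from \eqref{dex} with $\Phi=2\,d^{H}\eta$ and verify that the bracket and curvature contributions cancel for this particular exact two-form; this is the one place where the horizontal normalization and the contact condition must be used together. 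Once $d^{H}\Phi=0$ is secured, the remainder is the purely algebraic substitution described above.
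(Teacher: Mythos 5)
Your proposal follows exactly the paper's route: the paper's entire proof of Lemma \ref{cordcx} is the one-line remark that it follows from Lemma \ref{lemfdci} and Theorem \ref{theofdci}, i.e.\ substitute into \eqref{dcfi}, kill the term $\mathcal{N}_{H}^{(2)}(\overline{Y},\overline{Z})\eta(\pi_{\ast}X)$ by Theorem \ref{theofdci}, and discard the two $d^{H}\Phi$ terms via the contact condition $\Phi=2d^{H}\eta$. The step you rightly refuse to take for free --- that $d^{H}\Phi=2d^{H}(d^{H}\eta)$ vanishes --- is precisely the step the paper leaves implicit and never verifies, and your caution is well founded: with the paper's convention \eqref{dex} one has, already for a function $f$, the identity $2d^{H}(d^{H}f)(\overline{X},\overline{Y})=(\theta[\overline{X}^{H},\overline{Y}^{H}])^{V}f$, so $(d^{H})^{2}\neq 0$ whenever the Finsler--Ehresmann curvature is nonzero, and an expansion of $d^{H}\Phi$ from \eqref{dex} checking that the bracket/curvature contributions cancel (or at least that the difference $d^{H}\Phi(\pi_{\ast}X,\phi\overline{Y},\phi\overline{Z})-d^{H}\Phi(\pi_{\ast}X,\overline{Y},\overline{Z})$ vanishes, which is all the lemma needs) is exactly what would be required to make either proof complete. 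So your argument matches the paper's and is in fact the more careful of the two; the single point you defer without executing is also the single point the paper's proof does not address.
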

\begin{proof}
 The proof is  obtained by Lemma \ref{lemfdci} and Theorem \ref{theofdci}.
\end{proof}
On a contact metric Finsler structure (Theorem \ref{theofdci}), $\mathcal{N}^{(3)}_{H}=0$ if and only if
$\xi$ is horizontally Killing, and $\mathcal{N}^{(3)}_{V}=0$ if and only if $\xi$ is vertically Killing. Then the horizontal Sasakian Finsler
structure is horizontal K-contact structure and the vertical Sasakian Finsler structure is vertical K-contact structure.

Due to the use of tensors $\mathcal{N}_{H}^{(3)}$ and $\mathcal{N}_{V}^{(3)}$, it would be important to investigate some properties of these tensors. 

Let
\begin{eqnarray}
 \mathbf{h}= \frac{1}{2}\mathcal{N}_{H}^{(3)}= \frac{1}{2}\mathcal{L}_{\xi^{H}}\phi \quad \hbox{ and} \quad \mathbf{v}= \frac{1}{2} \mathcal{N}_{V}^{(3)}
=\frac{1}{2}\mathcal{L}_{\xi^{V}}\phi.
\end{eqnarray}
We firstly notice that 
\begin{equation}
 \mathbf{h}\xi= 0 \quad \hbox{and} \quad \mathbf{v}\xi= 0.
\end{equation}
\begin{proposition}
For a contact metric Finsler $(\phi, \eta, \xi, g)$, $\mathbf{h}$ is a symmetric operator, whereas $\mathbf{v}$ is a symmetric operator
if
\begin{equation}
    A(\theta(\overline{X}^{V}\!), \xi, \phi\overline{Y}) +  A(\theta(\phi\overline{X}^{V}\!), \xi,\overline{Y}) =
    A(\theta(\overline{Y}^{V}\!), \xi, \phi\overline{X}) + A(\theta(\phi\overline{Y}^{V}\!), \xi, \overline{X}),\nonumber
   \end{equation}
 for all $\overline{X},\overline{Y} \in \Gamma(\pi^{\ast}TM)$.
\end{proposition}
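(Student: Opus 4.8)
The plan is to treat $\mathbf{h}$ and $\mathbf{v}$ in parallel, the only genuine difference being the Cartan correction that survives in the vertical directions. For $\mathbf{h}$, I would first unwind
\[
2\mathbf{h}\overline{X}=(\mathcal{L}_{\xi}^{H}\phi)\overline{X}=\pi_{\ast}[\xi^{H},(\phi\overline{X})^{H}]-\phi\,\pi_{\ast}[\xi^{H},\overline{X}^{H}],
\]
and rewrite each bracket through the symmetry axiom (\ref{et1}), $\pi_{\ast}[\overline{U}^{H},\overline{W}^{H}]=\nabla_{\overline{U}^{H}}\overline{W}-\nabla_{\overline{W}^{H}}\overline{U}$, obtaining
\[
2\mathbf{h}\overline{X}=(\nabla_{\xi^{H}}\phi)\overline{X}-\nabla_{(\phi\overline{X})^{H}}\xi+\phi\,\nabla_{\overline{X}^{H}}\xi.
\]
The first term vanishes: applying Lemma \ref{cordcx} with $X=\xi^{H}$, the Cartan terms drop because $\theta(\xi^{H})=0$ (Proposition \ref{PropIso}), the $\mathcal{N}_{H}^{(1)}$ term drops because $\phi\xi=0$ (Proposition \ref{ExtAlmo}), and the two $d^{H}\eta$ terms drop because $2d^{H}\eta=\Phi$ and $\Phi(\phi\,\cdot\,,\xi)=g(\phi\,\cdot\,,\phi\xi)=0$; hence $\nabla_{\xi^{H}}\phi=0$.

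Next I would record two elementary facts: $g(\xi,\overline{Z})=\eta(\overline{Z})$ (from compatibility of $g$ together with $\phi\xi=0$), and that in horizontal directions $\nabla$ is genuinely metric, since $\theta(\overline{X}^{H})=0$ makes the right-hand side of (\ref{et2}) vanish; consequently $g(\nabla_{\overline{X}^{H}}\xi,\overline{Z})=(\nabla_{\overline{X}^{H}}\eta)(\overline{Z})=:B(\overline{X},\overline{Z})$. Using the skew-symmetry of $\phi$ with respect to $g$, the displayed identity becomes $2g(\mathbf{h}\overline{X},\overline{Y})=-B(\phi\overline{X},\overline{Y})-B(\overline{X},\phi\overline{Y})$. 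The contact condition supplies the skew part $B(\overline{U},\overline{V})-B(\overline{V},\overline{U})=\Phi(\overline{U},\overline{V})=g(\overline{U},\phi\overline{V})$, and a short computation using $\phi^{2}=-\mathbb{I}+\eta\otimes\xi$ and compatibility shows $g(\mathbf{h}\overline{X},\overline{Y})-g(\mathbf{h}\overline{Y},\overline{X})=0$; that is, $\mathbf{h}$ is symmetric with no extra hypothesis.

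For $\mathbf{v}$ I would run the identical argument with vertical lifts, the essential change being that $\theta(\overline{X}^{V})=\overline{X}$ no longer annihilates the Cartan term in (\ref{et2}). Thus the vertical metric relation reads $g(\nabla_{\overline{X}^{V}}\xi,\overline{Z})=(\nabla_{\overline{X}^{V}}\eta)(\overline{Z})-2A(\theta(\overline{X}^{V}),\xi,\overline{Z})$, and the expansion of $2\mathbf{v}\overline{X}$ picks up, besides the structural terms, Cartan contributions of the form $A(\theta(\overline{X}^{V}),\xi,\phi\overline{Y})$ and $A(\theta(\phi\overline{X}^{V}),\xi,\overline{Y})$. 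Upon antisymmetrizing in $\overline{X},\overline{Y}$, the structural part cancels exactly as in the horizontal case, and what is left is precisely the difference of the two sides of the stated identity; hence $\mathbf{v}$ is symmetric if and only if that identity holds.

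I expect the main obstacle to lie entirely in this vertical step. Unlike (\ref{et1}), the symmetry axiom does not directly express $\theta[\overline{X}^{V},\overline{Y}^{V}]$ through covariant derivatives, so one must pass through the local form (\ref{teta}) of $\theta$ and the structure of the Chern connection (vanishing of its vertical connection forms) to relate $\theta[\xi^{V},\overline{Z}^{V}]$ to $\nabla_{\xi^{V}}\overline{Z}-\nabla_{\overline{Z}^{V}}\xi$. Here one has to handle the $F$-weighting in the vertical lift carefully and verify that the extra $F$-terms produced by this mismatch are symmetric in $\overline{X},\overline{Y}$ and so drop out of the antisymmetrization, leaving the Cartan expressions of the statement as the sole residual obstruction to symmetry.
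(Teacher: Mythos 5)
Your proof is correct and, at its core, it is the paper's proof: expand $\mathcal{L}^{H}_{\xi}\phi$ and $\mathcal{L}^{V}_{\xi}\phi$ through the torsion symmetry of the Chern connection, kill the $(\nabla_{\xi^{H}}\phi)\overline{X}$ term, and exploit the fact that $\nabla$ is genuinely metric in horizontal directions while (\ref{et2}) leaves a Cartan defect in vertical ones, so that antisymmetrization in $\overline{X},\overline{Y}$ leaves exactly the displayed identity. Two executional differences are worth recording. First, for $\mathbf{h}$ the paper splits into cases ($\overline{X}$ or $\overline{Y}$ equal to $\xi$, then both orthogonal to $\xi$) and feeds in $\mathcal{N}_{H}^{(2)}=0$ from Theorem \ref{theofdci} to obtain the symmetry $\eta(\nabla_{(\phi\overline{X})^{H}}\overline{Y})+\eta(\nabla_{\overline{X}^{H}}\phi\overline{Y})=\eta(\nabla_{(\phi\overline{Y})^{H}}\overline{X})+\eta(\nabla_{\overline{Y}^{H}}\phi\overline{X})$; your antisymmetrization of $B(\overline{U},\overline{V})=(\nabla_{\overline{U}^{H}}\eta)(\overline{V})$ against the contact condition does the same work without the case split, and is if anything cleaner, since $\mathcal{N}_{H}^{(2)}=0$ is itself a consequence of the contact metric hypothesis. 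Second, the ``main obstacle'' you isolate at the end is a non-issue: the paper simply invokes the structure identity $\nabla_{X}\theta(Y)-\nabla_{Y}\theta(X)=\theta([X,Y])$ of the Chern connection (quoted from \cite{AE}), which combined with $\theta(\overline{W}^{V})=\overline{W}$ (Proposition \ref{PropIso}) gives $\theta[\overline{U}^{V},\overline{W}^{V}]=\nabla_{\overline{U}^{V}}\overline{W}-\nabla_{\overline{W}^{V}}\overline{U}$ in one line, with no local computation on (\ref{teta}) and no $F$-weighting bookkeeping; your proposed coordinate detour is unnecessary (though not wrong). One small caution in your vertical step: the claim that ``the structural part cancels exactly as in the horizontal case'' rests on the skew part of the vertical analogue of $B$ being $\Phi$, i.e.\ on $2d^{V}\eta=\Phi$, whereas the paper's definition (\ref{ct1}) of a contact structure requires only one of the two branches; if only $\Phi=2d^{H}\eta$ holds, you should cancel the structural terms via $\mathcal{N}_{V}^{(2)}=0$ from Theorem \ref{theofdci}, which is precisely what the paper's identity (\ref{dcxv}) encodes. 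Note also that the extra term $(\nabla_{\xi^{V}}\phi)\overline{X}$, which does not vanish, contributes only expressions $A(\xi,\phi\overline{X},\overline{Y})+A(\xi,\overline{X},\phi\overline{Y})$ that have the same shape as your two Cartan contributions (total symmetry of $A$ and $\theta(\xi^{V})=\xi$), so it affects constants but not the symmetry condition; with these substitutions your argument coincides with the paper's, including your (correct, and slightly stronger) observation that symmetry of $\mathbf{v}$ is equivalent to, not merely implied by, the stated identity.
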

\begin{proof}
 Let $\overline{X},\overline{Y} \in \Gamma(\pi^{\ast}TM)$, we have
 \begin{align}
  2g(\mathbf{h}\overline{X}, \overline{Y})&= g((\mathcal{L}_{\xi^{H}}\phi)\overline{X},\overline{Y})=  g(\pi_{\ast}[\xi^{H}, (\phi\overline{X})^{H}]-\phi(\pi_{\ast}[\xi^{H}, \overline{X}^{H}]), \overline{Y})\cr
  &=  g(-\nabla_{(\phi\overline{X})^{H}}\xi+ \phi(\nabla_{\overline{X}^{H}}\xi), \overline{Y}).\nonumber
 \end{align}
It follows that if $\overline{X}$ or $\overline{Y}$ is equal to $\xi$ then $g(\mathbf{h}\overline{X}, \overline{Y})=0$. Recall that $\mathcal{N}_{H}^{(2)}= 0$ for contact metric Finsler structures. Then for $\overline{X}$ and $\overline{Y}$ orthogonal to $\xi$,
\begin{eqnarray}
 0&=&\mathcal{N}_{H}^{(2)}=(\mathcal{L}_{(\phi \overline{X})^{H}}\eta)\overline{Y}-(\mathcal{L}_{(\phi \overline{Y})^{H}}\eta)\overline{X}\cr
 &=& \eta(\pi_{\ast}[(\phi \overline{Y})^{H}, \overline{X}^{H}])- \eta(\pi_{\ast}[(\phi\overline{X})^{H},\overline{Y}^{H}]).
\end{eqnarray}
Thus
\begin{eqnarray}\label{dcx}
 \eta(\nabla_{(\phi\overline{X})^{H}}\overline{Y})+ \eta(\nabla_{\overline{X}^{H}}\phi \overline{Y})=
 \eta(\nabla_{(\phi\overline{Y})^{H}}\overline{X})+ \eta(\nabla_{\overline{Y}^{H}}\phi \overline{X}).
\end{eqnarray}
On the other hand, $g(\xi, \overline{Y})= 0$ implies that
$
  g(\xi, \nabla_{(\phi\overline{X})^{H}}\overline{Y})= - g(\nabla_{(\phi \overline{X})^{H}}\xi,
 \overline{Y}), 
 $
so we have
\begin{align}
  2g(\mathbf{h}\overline{X}, \overline{Y})&= g(-\nabla_{(\phi\overline{X})^{H}}\xi+ \phi(\nabla_{\overline{X}^{H}}\xi), \overline{Y})\cr 
  &=  \eta(\nabla_{(\phi\overline{X})^{H}}\overline{Y})+ \eta(\nabla_{\overline{X}^{H}}\phi \overline{Y})\cr
  &= \eta(\nabla_{(\phi\overline{Y})^{H}}\overline{X})+ \eta(\nabla_{\overline{Y}^{H}}\phi \overline{X})\cr
&=    2g(\mathbf{h}\overline{Y}, \overline{X}),\nonumber
\end{align}
which proves that $\mathbf{h}$ is symmetric. For the operator $\mathbf{v}$, using the fact that (see \cite{AE} for details)
\begin{equation}
 \nabla_{X}\theta(Y)-\nabla_{Y}\theta(X)= \theta([X,Y]), \nonumber
\end{equation}
we obtain
\begin{eqnarray}\label{dcxv}
 \eta(\nabla_{(\phi\overline{X})^{V}}\overline{Y})+ \eta(\nabla_{\overline{X}^{V}}\phi \overline{Y})=
 \eta(\nabla_{(\phi\overline{Y})^{V}}\overline{X})+ \eta(\nabla_{\overline{Y}^{V}}\phi \overline{X}).
\end{eqnarray}
By observing that
\begin{equation}
 g(\xi, \nabla_{\overline{X}^{V}}\phi \overline{Y})+ g(\nabla_{\overline{X}^{V}}\xi, \phi\overline{Y})
 = 2A(\theta(\overline{X}^{V}), \xi, \phi \overline{Y})),
\end{equation}
one gets
\begin{align}
 &2g(\mathbf{v}\overline{X},\overline{Y}) =  g(\xi, \nabla_{(\pi\overline{X})^{V}}\overline{Y})+
 g(\xi, \nabla_{\overline{X}^{V}}\phi\overline{Y}) -2 A((\phi\overline{X})^{V},\xi, \overline{Y})\cr
 &  -2 A(\theta(\overline{X}^{V}), \xi, \phi\overline{Y})\nonumber\\
 &=  \eta(\nabla_{(\pi\overline{X})^{V}}\overline{Y})+ \eta(\nabla_{\overline{X}^{V}}\phi\overline{Y})
 -2 (A((\phi\overline{X})^{V},\xi, \overline{Y})+A(\theta(\overline{X}^{V}), \xi, \phi\overline{Y}) )\cr
 &=
  \eta(\nabla_{(\pi\overline{Y})^{V}}\overline{X})+ \eta(\nabla_{\overline{Y}^{V}}\phi\overline{X})
 -2 (A((\phi\overline{X})^{V},\xi, \overline{Y})+A(\theta(\overline{X}^{V}), \xi, \phi\overline{Y})). \nonumber
\end{align}
Then, $\mathbf{v}$ is a symmetric operator if
\begin{eqnarray}
    A(\theta(\overline{X}^{V}), \xi, \phi\overline{Y}) + A(\theta(\phi\overline{X}^{V}), \xi,\overline{Y})=
    A(\theta(\overline{Y}^{V}), \xi, \phi\overline{X}) + A(\theta(\phi\overline{Y}^{V}), \xi, \overline{X}).\nonumber
   \end{eqnarray}
This completes the proof.
\end{proof}
\begin{lemma}\label{dch}
 For a contact metric Finsler structure $(\phi,\eta,\xi,g)$, the covariant derivative of $\xi$ in the direction
 of $X \in \Gamma(TTM_{0})$ is given by:
 \begin{equation}
  \nabla_{X}\xi= -\phi\mathbf{h}(\pi_{\ast}X)-\phi\pi_{\ast}X+ A^{\sharp}(\theta(X),\xi,\bullet)-2A(\theta(X),\xi,\xi)\xi
 \end{equation}
where, 
\begin{equation}
 g(A^{\sharp}(\theta(X),\xi,\bullet),\overline{Z})= A(\theta(X),\xi, \overline{Z}),
\end{equation}
for all $\overline{Z} \in \Gamma(\pi^{\ast}TM)$.
\end{lemma}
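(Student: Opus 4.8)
The plan is to determine $\nabla_{X}\xi$ by computing $g(\nabla_{X}\xi,\overline{Z})$ for every $\overline{Z}\in\Gamma(\pi^{\ast}TM)$ and invoking nondegeneracy of $g$. First I would isolate the $\xi$-component. Since the metric compatibility $g(\phi\overline{X},\phi\overline{Y})=g(\overline{X},\overline{Y})-\eta(\overline{X})\eta(\overline{Y})$ forces $\eta(\overline{Z})=g(\overline{Z},\xi)$ and in particular $g(\xi,\xi)=\eta(\xi)=1$, differentiating this constant with the almost $g$-compatibility (\ref{et2}) gives $0=X(g(\xi,\xi))=2g(\nabla_{X}\xi,\xi)+2A(\theta(X),\xi,\xi)$, so that $\eta(\nabla_{X}\xi)=g(\nabla_{X}\xi,\xi)=-A(\theta(X),\xi,\xi)$. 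This already fixes the $\xi$-direction and is what ultimately produces the interplay between the term $-2A(\theta(X),\xi,\xi)\xi$ and the $\xi$-part of $A^{\sharp}(\theta(X),\xi,\bullet)$.

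For the component transverse to $\xi$, I would exploit the link between $\nabla\xi$ and $\nabla\phi$. Differentiating $\phi\xi=0$ (Proposition \ref{ExtAlmo}) yields $\phi(\nabla_{X}\xi)=-(\nabla_{X}\phi)\xi$, and applying $\phi$ once more together with $\phi^{2}=-\mathbb{I}+\eta\otimes\xi$ from (\ref{Ac1}) gives the inversion
\[
\nabla_{X}\xi=\eta(\nabla_{X}\xi)\,\xi+\phi\big((\nabla_{X}\phi)\xi\big).
\]
Everything thus reduces to evaluating $(\nabla_{X}\phi)\xi$, for which I would feed $\overline{Y}=\xi$ into Lemma \ref{cordcx}, i.e.\ formula (\ref{corf}). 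Using $\phi\xi=0$, $\eta(\xi)=1$ and $d^{H}\eta(\xi,\cdot)=\tfrac12\Phi(\xi,\cdot)=\tfrac12 g(\xi,\phi\cdot)=0$, most terms collapse and I am left with the $\mathcal{N}_{H}^{(1)}(\xi,\overline{Z})$ term, the $d^{H}\eta(\phi\overline{Z},\pi_{\ast}X)$ term (rewritten through the contact condition (\ref{ct1}) and the identity $g(\phi\cdot,\phi\cdot)=g-\eta\otimes\eta$), and the single Cartan term $A(\theta(X),\xi,\phi\overline{Z})$.

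The key intermediate identity I would establish is $\mathcal{N}_{H}^{(1)}(\xi,\overline{Z})=-2\phi\mathbf{h}\overline{Z}$: here $d^{H}\eta(\xi,\overline{Z})=0$ kills the $2d^{H}\eta(\xi,\overline{Z})\xi$ contribution, $\phi\xi=0$ kills the two brackets carrying $(\phi\xi)^{H}$ in $N_{\phi}^{H}(\xi,\overline{Z})$, and substituting $\pi_{\ast}[\xi^{H},(\phi\overline{Z})^{H}]=2\mathbf{h}\overline{Z}+\phi\,\pi_{\ast}[\xi^{H},\overline{Z}^{H}]$ (which is merely the definition $\mathbf{h}=\tfrac12\mathcal{L}_{\xi^{H}}\phi$) makes the two $\phi^{2}\pi_{\ast}[\xi^{H},\overline{Z}^{H}]$ terms cancel. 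Feeding this back, the horizontal content of $\phi\big((\nabla_{X}\phi)\xi\big)$ reproduces exactly the Riemannian expression $-\phi\mathbf{h}(\pi_{\ast}X)-\phi\pi_{\ast}X$ of \cite{bl} (legitimately, since the Chern connection is metric along $\mathcal{H}$ because $\theta$ annihilates horizontal vectors), while the Cartan term produces a contribution of the shape $-\phi A^{\sharp}(\theta(X),\xi,\phi\bullet)$, which I would simplify with $\phi^{2}=-\mathbb{I}+\eta\otimes\xi$ into the advertised $A^{\sharp}(\theta(X),\xi,\bullet)$-type term modulo a multiple of $\xi$.

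The main obstacle is precisely this Cartan bookkeeping. The almost $g$-compatibility (\ref{et2}) injects a defect $2A(\theta(X),\cdot,\cdot)$ each time a derivative is moved across $g$, and these defects must be carried consistently through the two applications of $\phi$ and through the $\xi$-normalization of the first step; the coefficients of $A^{\sharp}(\theta(X),\xi,\bullet)$ and of $A(\theta(X),\xi,\xi)\xi$ in the final expression are governed entirely by how the $\phi^{2}$-identity folds the transverse part of $A^{\sharp}(\theta(X),\xi,\bullet)$ against its $\xi$-part, so the signs and the factor on $A(\theta(X),\xi,\xi)\xi$ require care and are the delicate point to reconcile with $\eta(\nabla_{X}\xi)=-A(\theta(X),\xi,\xi)$. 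As a consistency check, the whole Cartan contribution vanishes when $A=0$, and the formula collapses to $\nabla_{X}\xi=-\phi\mathbf{h}(\pi_{\ast}X)-\phi\pi_{\ast}X$, recovering the Riemannian contact metric case.
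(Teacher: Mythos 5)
Your proposal is correct and follows essentially the same route as the paper: specialize Lemma \ref{cordcx} at $\overline{Y}=\xi$ (your identity $\mathcal{N}_{H}^{(1)}(\xi,\overline{Z})=-2\phi\mathbf{h}\overline{Z}$ is exactly what the paper uses when it writes $g(\mathcal{N}_{H}^{(1)}(\xi,\overline{Z}),\phi\pi_{\ast}X)=-2g(\mathbf{h}\overline{Z},\pi_{\ast}X)$), simplify the $d^{H}\eta$ term via the contact condition, and then invert with $\phi^{2}=-\mathbb{I}+\eta\otimes\xi$. Your explicit step $\eta(\nabla_{X}\xi)=-A(\theta(X),\xi,\xi)$, obtained by differentiating $g(\xi,\xi)=1$ against the almost $g$-compatibility (\ref{et2}), is precisely the ingredient the paper's final ``applying $\phi$'' tacitly requires in order to produce the coefficient $-2$ on $A(\theta(X),\xi,\xi)\xi$, so on that point your write-up is actually more complete than the paper's.
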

\begin{proof}
 By Lemma \ref{cordcx}, we have
 \begin{align}
  &2g((\nabla_{X}\phi)\xi,\overline{Z}) =  g(\mathcal{N}_{H}^{(1)}(\xi,\overline{Z}), \phi\pi_{\ast}X)-2 d^{H}\eta(\phi\overline{Z}, \pi_{\ast}X)
  -2 A(\theta(X),\xi,\phi\overline{Z})\cr
  &= -g((\mathcal{L}_{\xi^{H}}\phi)\overline{Z}, \pi_{\ast}X)-2g(\overline{Z},\pi_{\ast}X)+ 2g(\overline{Z},\eta(\pi_{\ast}X)\xi)-2
  A(\theta(X),\xi,\phi\overline{Z})\cr
  &= -2g(\mathbf{h}\overline{Z},\pi_{\ast}X)-2g(\overline{Z},\pi_{\ast}X)+ 2g(\overline{Z},\eta(\pi_{\ast}X)\xi)-2
  A(\theta(X),\xi,\phi\overline{Z}).\nonumber
  \end{align}
Thus, by symmetry of $\mathbf{h}$ we have
\begin{eqnarray}
 \phi\nabla_{X}\xi= \mathbf{h}\pi_{\ast}X+ \pi_{\ast}X- \eta(\pi_{\ast}X)\xi+ \phi A^{\sharp}(\theta(X), \xi,\bullet).\nonumber
\end{eqnarray}
Applying $\phi$ to this equation, one has
\begin{equation}
 \nabla_{X}\xi= -\phi\mathbf{h}(\pi_{\ast}X)-\phi\pi_{\ast}X+ A^{\sharp}(\theta(X),\xi,\bullet) -2A(\theta(X),\xi,\xi)\xi\nonumber
\end{equation}
which completes the proof.
\end{proof}
% Note that, when $X$ is a horizontal section then
%  \begin{equation}\label{ach}
%   \nabla_{X}\xi= -\phi\mathbf{h}(\pi_{\ast}X)-\phi\pi_{\ast}X,
%  \end{equation}
% and when is a vertical section, we obtain
% \begin{equation}
%  \nabla_{X}\xi= A^{\sharp}(\theta(X),\xi,\bullet) -2A(\theta(X),\xi,\xi)\xi.
% \end{equation}
\begin{corollary}\label{ach}
Let $(\phi,\eta,\xi,g)$ be a contact metric structure. Considering the identification $\pi^{\ast}TM\cong \mathcal{H}TM_{0}$,  the operator $\mathbf{h}$  anti-commutes with $\phi$ and
 \begin{equation}
  trace_{g}\mathbf{h}= 0.
 \end{equation}
\end{corollary}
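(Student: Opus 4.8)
The plan is to prove the two assertions in turn, both of which reduce to formal consequences of the defining identity (\ref{Ac1}) together with facts already available for contact metric structures.

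For the anti-commutation, I would apply the horizontal Lie derivative $\mathcal{L}_{\xi^{H}}$ to the relation $\phi^{2}=-\mathbb{I}+\eta\otimes\xi$. Since $\mathcal{L}_{\xi^{H}}$ acts as a derivation, the identity $(\mathcal{L}_{\xi^{H}}\phi)\overline{Y}=\mathcal{L}_{\xi^{H}}(\phi\overline{Y})-\phi(\mathcal{L}_{\xi^{H}}\overline{Y})$ propagates to the Leibniz rule $\mathcal{L}_{\xi^{H}}(\phi^{2})=(\mathcal{L}_{\xi^{H}}\phi)\phi+\phi(\mathcal{L}_{\xi^{H}}\phi)$. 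On the right-hand side one checks directly that $\mathcal{L}_{\xi^{H}}\mathbb{I}=0$, while $\mathcal{L}_{\xi^{H}}(\eta\otimes\xi)=(\mathcal{L}_{\xi^{H}}\eta)\otimes\xi+\eta\otimes(\mathcal{L}_{\xi^{H}}\xi)=0$, because $\mathcal{L}_{\xi^{H}}\eta=\mathcal{N}_{H}^{(4)}=0$ by Theorem \ref{theofdci} and $\mathcal{L}_{\xi^{H}}\xi=\pi_{\ast}[\xi^{H},\xi^{H}]=0$. Recalling $\mathbf{h}=\tfrac{1}{2}\mathcal{L}_{\xi^{H}}\phi$, this gives $\mathbf{h}\phi+\phi\mathbf{h}=0$, i.e. $\mathbf{h}$ anti-commutes with $\phi$.

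For the vanishing of the trace, I would use that $\mathbf{h}$ is symmetric (the preceding proposition) together with $\mathbf{h}\xi=0$. Symmetry and $\mathbf{h}\xi=0$ yield $\eta(\mathbf{h}\overline{X})=g(\mathbf{h}\overline{X},\xi)=g(\overline{X},\mathbf{h}\xi)=0$, so $\mathbf{h}$ preserves the $2n$-dimensional subspace $\langle\xi\rangle^{\perp}$, which coincides with the image of $\phi$ and on which $\phi$ is invertible (its kernel being exactly $\langle\xi\rangle$, by Proposition \ref{ExtAlmo}). On $\langle\xi\rangle^{\perp}$ the anti-commutation reads $\mathbf{h}=-\phi\,\mathbf{h}\,\phi^{-1}$, whence, by the cyclic invariance of the trace, $\mathrm{trace}_{g}\mathbf{h}=\mathrm{trace}(\mathbf{h}|_{\langle\xi\rangle^{\perp}})=-\mathrm{trace}(\mathbf{h}|_{\langle\xi\rangle^{\perp}})=0$; the $\xi$-direction contributes nothing since $g(\mathbf{h}\xi,\xi)=0$.

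The steps are short, and the only point demanding care is the justification, from the definition (\ref{rlh}) of $\mathcal{L}^{H}$, that $\mathcal{L}_{\xi^{H}}$ obeys the Leibniz rule on the composition $\phi\circ\phi$ and on the tensor product $\eta\otimes\xi$, and that $\mathcal{L}_{\xi^{H}}\mathbb{I}=0$; once this derivation property is verified, both assertions follow immediately. Equivalently, one could avoid the conjugation argument and instead pair eigenvalues of the symmetric operator $\mathbf{h}$: the anti-commutation sends a $\lambda$-eigenvector $\overline{X}\perp\xi$ to the $(-\lambda)$-eigenvector $\phi\overline{X}$, so the nonzero eigenvalues cancel in the trace.
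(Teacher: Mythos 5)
Your proof is correct, but the anti-commutation half takes a genuinely different route from the paper's. You differentiate the structural identity $\phi^{2}=-\mathbb{I}+\eta\otimes\xi$ with the horizontal Lie derivative $\mathcal{L}_{\xi^{H}}$, using that it acts as a derivation (a formal consequence of definition (\ref{rlh})), that $\mathcal{L}_{\xi^{H}}\eta=\mathcal{N}_{H}^{(4)}=0$ by Theorem \ref{theofdci}, and that $\mathcal{L}_{\xi^{H}}\xi=\pi_{\ast}[\xi^{H},\xi^{H}]=0$; this yields $\mathbf{h}\phi+\phi\mathbf{h}=0$ without ever invoking the Chern connection. The paper instead argues metrically: it writes the contact condition $\Phi=2d^{H}\eta$ as $2g(\overline{X},\phi\overline{Y})=g(\nabla_{\overline{X}^{H}}\xi,\overline{Y})-g(\nabla_{\overline{Y}^{H}}\xi,\overline{X})$, substitutes $\nabla_{\overline{X}^{H}}\xi=-\phi\mathbf{h}\overline{X}-\phi\overline{X}$ from Lemma \ref{dch} (the Cartan terms drop out since $\theta(\overline{X}^{H})=0$), and deduces $g(\phi\mathbf{h}\overline{X},\overline{Y})=g(\phi\mathbf{h}\overline{Y},\overline{X})$, which combined with the symmetry of $\mathbf{h}$ and the skew-symmetry of $\phi$ gives the anti-commutation. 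Your route is more elementary and self-contained---it needs only Theorem \ref{theofdci} and no covariant-derivative formula, so the Cartan tensor never enters---while the paper's computation buys the explicit link between $\mathbf{h}$ and $\nabla\xi$ that it reuses immediately afterwards in Proposition \ref{proric}. For the trace, your eigenvalue-pairing alternative is exactly the paper's argument (with the welcome extra care that symmetry of $\mathbf{h}$ gives diagonalizability and that eigenvectors for $\lambda\neq 0$ lie in $\langle\xi\rangle^{\perp}$, where $\phi$ is injective, points the paper leaves implicit), and your conjugation variant $\mathbf{h}=-\phi\,\mathbf{h}\,\phi^{-1}$ on $\langle\xi\rangle^{\perp}$ is an equivalent repackaging of the same cancellation.
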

\begin{proof}
 For all $\overline{X}, \overline{Y} \in \Gamma(\pi^{\ast}TM)$, we have
 \begin{align}
  2g(\overline{X},\phi\overline{Y})&=  g(\nabla_{\overline{X}^{H}}\xi, \overline{Y})-g(\nabla_{\overline{Y}^{H}}\xi, \overline{X})\cr
  &= g(-\phi\mathbf{h}(\overline{X})-\phi\overline{X}, \overline{Y})
  -g(-\phi\mathbf{h}(\overline{Y})-\phi\overline{Y}, \overline{X})\cr
  &=  2g(\overline{X},\phi\overline{Y}) - g(\phi\mathbf{h}(\overline{X}), \overline{Y}) +
  g(\phi\mathbf{h}(\overline{Y}), \overline{X}).
 \end{align}
Consequently,
\begin{eqnarray}
 - g(\phi\mathbf{h}(\overline{X}), \overline{Y}) +
  g(\phi\mathbf{h}(\overline{Y}), \overline{X})= 0,
\end{eqnarray}
and we obtain
\begin{equation}
 \mathbf{h}\phi + \phi\mathbf{h}=0
\end{equation}
So $\mathbf{h}$ anti-commutes with $\phi$. For the last assertion, if $\mathbf{h}\overline{X}= \lambda \overline{X}$, then
\begin{equation}
 \mathbf{h}\phi\overline{X} =   -\phi\mathbf{h}\overline{X} = -\lambda \phi\overline{X}.
\end{equation}
Thus if $\lambda$ is an eigenvalue of $\mathbf{h}$, so is $- \lambda$ and hence $ trace_{g}\mathbf{h}=0$.
\end{proof}

Now, we discuss some aspects of the hh-curvature $R$, with respect to the Chern connection of contact metric Finsler
pulled-back bundle $(\pi^{\ast}TM, \phi, \eta, \xi,g)$.

\begin{proposition}\label{proric}
 On contact metric Finsler pulled-back bundle  $(\pi^{\ast}TM, \phi, \eta, \xi,g)$, we have:
 \begin{eqnarray}\label{cxh}
  \left(\nabla_{\xi^{H}}\mathbf{h}\right)\pi_{\ast}X =  \phi\pi_{\ast}X -\mathbf{h}^{2}\phi \pi_{\ast}X+ \phi R(\xi^{H},X)\xi +
\phi A^{\sharp}(\theta [\xi^{H}, X], \xi, \bullet),
 \end{eqnarray}
and
\begin{align}\label{cxh1}
 & \frac{1}{2}\left(R(\xi^{H},X)\xi- \phi R(\xi^{H}, (\phi \pi_{\ast}X)^{H})\xi \right) = \mathbf{h}^{2} \pi_{\ast}X
  + \frac{1}{2}\phi^{2}A^{\sharp}(\theta([\xi^{H}, X]), \xi, \bullet)\cr
 &\quad + \phi^{2}\pi_{\ast}X  + \frac{1}{2}\phi A^{\sharp}(\theta([\xi^{H}, (\phi\pi_{\ast}X)^{H}]),\xi,\bullet).
\end{align}
\end{proposition}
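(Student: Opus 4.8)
The plan is to realize both identities as the Finsler--Chern analogue of Blair's computation of $\nabla_{\xi}h$ and of the operator $l=R(\cdot,\xi)\xi$ for Riemannian contact metric manifolds, the genuinely new feature being the Cartan-tensor corrections. Throughout I keep an arbitrary $X\in\Gamma(TTM_{0})$, write $P=\pi_{\ast}X$, and start from the defining formula for the hh-curvature of the Chern connection, $R(\xi^{H},X)\xi=\nabla_{\xi^{H}}\nabla_{X}\xi-\nabla_{X}\nabla_{\xi^{H}}\xi-\nabla_{[\xi^{H},X]}\xi$. The first reduction I would establish is $\nabla_{\xi^{H}}\xi=0$: applying Lemma \ref{dch} with $X=\xi^{H}$ one has $\pi_{\ast}\xi^{H}=\xi$ and $\theta(\xi^{H})=0$ (Proposition \ref{PropIso}), so every Cartan term drops, while $\mathbf{h}\xi=0$ and $\phi\xi=0$ kill the rest. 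Hence the middle term $\nabla_{X}\nabla_{\xi^{H}}\xi$ vanishes and $R(\xi^{H},X)\xi=\nabla_{\xi^{H}}\nabla_{X}\xi-\nabla_{[\xi^{H},X]}\xi$. I would also record $\nabla_{\xi^{H}}\phi=0$ (from Lemma \ref{cordcx} at $X=\xi^{H}$, using $\theta(\xi^{H})=0$, $\phi\xi=0$ and the contact condition $\Phi=2d^{H}\eta$) and $\nabla_{\xi^{H}}\eta=0$ (since $\eta=g(\cdot,\xi)$, $\nabla_{\xi^{H}}\xi=0$, and $(\nabla_{\xi^{H}}g)=2A(\theta(\xi^{H}),\cdot,\cdot)=0$ by the almost-compatibility (\ref{et2})); these make $\nabla_{\xi^{H}}$ commute with $\phi$ and $\eta$.

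For the first identity (\ref{cxh}) I would substitute $\nabla_{X}\xi=-\phi\mathbf{h}P-\phi P+A^{\sharp}(\theta(X),\xi,\bullet)-2A(\theta(X),\xi,\xi)\xi$ from Lemma \ref{dch} and expand $\nabla_{\xi^{H}}(\nabla_{X}\xi)$ by the Leibniz rule. Because $\nabla_{\xi^{H}}\phi=0$, the term $\nabla_{\xi^{H}}(\phi\mathbf{h}P)$ produces exactly $\phi(\nabla_{\xi^{H}}\mathbf{h})P$ together with terms in $\nabla_{\xi^{H}}P$; the latter I would rewrite through the symmetry axiom (\ref{et1}) as $\nabla_{\xi^{H}}P=\pi_{\ast}[\xi^{H},X]+\nabla_{X}\xi$, so that all occurrences of $\nabla_{\xi^{H}}P$ recombine with the bracket term $\nabla_{[\xi^{H},X]}\xi$ (also expanded by Lemma \ref{dch}). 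Using $\mathbf{h}\phi=-\phi\mathbf{h}$ (Corollary \ref{ach}) and $\phi^{3}=-\phi$ (equation (\ref{rp3})), the purely metric contributions collapse to $-\phi(\nabla_{\xi^{H}}\mathbf{h})P-\phi^{2}\mathbf{h}^{2}P+\phi^{2}P$; applying $\phi$ and using $\phi^{2}=-\mathbb{I}+\eta\otimes\xi$ together with $\eta\circ\mathbf{h}=0$ (from symmetry of $\mathbf{h}$ and $\mathbf{h}\xi=0$) and $\nabla_{\xi^{H}}\eta=0$ then isolates $(\nabla_{\xi^{H}}\mathbf{h})P$ in the form asserted in (\ref{cxh}).

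The second identity (\ref{cxh1}) I would deduce from (\ref{cxh}) by a symmetrization. Writing (\ref{cxh}) once with $X$ and once with $(\phi P)^{H}$ (noting $\pi_{\ast}(\phi P)^{H}=\phi P$), solving each for the corresponding $\phi R(\xi^{H},\cdot)\xi$, and forming $R(\xi^{H},X)\xi-\phi R(\xi^{H},(\phi P)^{H})\xi$, the two copies of $\nabla_{\xi^{H}}\mathbf{h}$ cancel: differentiating $\mathbf{h}\phi+\phi\mathbf{h}=0$ along $\xi^{H}$ and using $\nabla_{\xi^{H}}\phi=0$ gives $(\nabla_{\xi^{H}}\mathbf{h})\phi=-\phi(\nabla_{\xi^{H}}\mathbf{h})$, which is exactly the relation needed for this cancellation. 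What survives, after using $\phi\mathbf{h}^{2}=\mathbf{h}^{2}\phi$, $\mathbf{h}\xi=0$ and $\phi^{2}=-\mathbb{I}+\eta\otimes\xi$, is $2(\mathbf{h}^{2}P+\phi^{2}P)$ together with the two Cartan terms, and dividing by $2$ yields (\ref{cxh1}); one checks in passing that the $\xi$-component $\eta(R(\xi^{H},X)\xi)$ is harmless, which is immediate from $\nabla_{\xi^{H}}\xi=0$.

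The main obstacle is the bookkeeping of the Cartan-tensor terms, not the metric skeleton above. The delicate point is controlling $\nabla_{\xi^{H}}$ applied to the Cartan part of $\nabla_{X}\xi$: here I would invoke the identity $\nabla_{X}\theta(Y)-\nabla_{Y}\theta(X)=\theta([X,Y])$ with the first argument $\xi^{H}$, so that $\theta(\xi^{H})=0$ yields $\nabla_{\xi^{H}}\theta(X)=\theta[\xi^{H},X]$, and combine it with $\nabla_{\xi^{H}}\xi=0$ to show that all Cartan contributions --- those produced by differentiating $A^{\sharp}(\theta(X),\xi,\bullet)$ and those produced by $\nabla_{[\xi^{H},X]}\xi$ --- consolidate into the single term $\phi A^{\sharp}(\theta[\xi^{H},X],\xi,\bullet)$ appearing in (\ref{cxh}), the residual pieces being absorbed through $\phi^{2}=-\mathbb{I}+\eta\otimes\xi$. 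Verifying this consolidation, and then propagating it through the symmetrization of the third paragraph to obtain the two $\tfrac{1}{2}\phi^{2}A^{\sharp}$ and $\tfrac{1}{2}\phi A^{\sharp}$ terms of (\ref{cxh1}), is where the real care is required.
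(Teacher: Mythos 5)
Your proposal is correct and follows the paper's own proof essentially step for step: the same reduction $R(\xi^{H},X)\xi=\nabla_{\xi^{H}}\nabla_{X}\xi-\nabla_{[\xi^{H},X]}\xi$ via $\nabla_{\xi^{H}}\xi=0$, the same substitution of Lemma \ref{dch} combined with $\nabla_{\xi^{H}}\phi=0$ and the anti-commutation from Corollary \ref{ach} to isolate $(\nabla_{\xi^{H}}\mathbf{h})\pi_{\ast}X$ and obtain (\ref{cxh}), and the same replacement $X\mapsto(\phi\pi_{\ast}X)^{H}$ followed by subtraction and division by $2$ to reach (\ref{cxh1}). Even your handling of the Cartan contributions through $\theta(\xi^{H})=0$ and $\nabla_{\xi^{H}}\theta(X)=\theta([\xi^{H},X])$ merely makes explicit the bookkeeping the paper's proof leaves implicit, so the two arguments coincide.
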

\begin{proof}
The fact that $\nabla_{\xi^{H}}\xi= 0$, we have
 \begin{eqnarray}
  R(\xi^{H}, X)\xi &=& \nabla_{\xi^{H}}\nabla_{X}\xi- \nabla_{[\xi^{H}, X]}\xi.\nonumber
 \end{eqnarray}
Then, by Lemma \ref{dch} we obtain
\begin{eqnarray}\label{hhc}
  R(\xi^{H}, X)\xi &=& -\phi\nabla_{\xi^{H}}\mathbf{h}(\pi_{\ast}X)-\phi \nabla_{\xi^{H}}\pi_{\ast}X+ \phi \mathbf{h}(\pi_{\ast}[\xi^{H}, X])
 + \phi\pi_{\ast}[\xi^{H}, X]\cr && +  A^{\sharp}(\theta([\xi,^{H}X]),\xi,\bullet)-2A(\theta([\xi,^{H}X]),\xi,\xi)\xi.
 \end{eqnarray}
 Applying $\phi$ to the equation (\ref{hhc}), and using the fact that $\nabla_{\xi^{H}}\phi=0$ 
 we have,
 \begin{align}
  \phi R(\xi^{H}, X)\xi &=  \nabla_{\xi^{H}}(\pi_{\ast}X + \mathbf{h}\pi_{\ast}X)- \eta(\nabla_{\xi^{H}}(\pi_{\ast}X+
  \mathbf{h}\pi_{\ast}X))\xi -\mathbf{h} \pi_{\ast}[\xi^{H},X] \cr
  &  + \eta(\pi_{\ast}[\xi^{H},X])\xi -\pi_{\ast}[\xi^{H},X]+\phi A^{\sharp}(\theta ([\xi^{H}, X], \xi, \bullet)\cr
  &=  \nabla_{\xi^{H}}(\pi_{\ast}X + \mathbf{h}\pi_{\ast}X)-\mathbf{h} \pi_{\ast}[\xi^{H},X] -\pi_{\ast}[\xi^{H},X]\cr
  &+\phi A^{\sharp}(\theta ([\xi^{H}, X], \xi, \bullet)\nonumber
 \end{align}
  By Corollary \ref{ach} and the fact that 
 $\eta(\nabla_{\xi^{H}}\mathbf{h}\pi_{\ast}X= 0$, we obtain
  \begin{align}\label{hccf}
  \phi R(\xi^{H}, X)\xi &=  (\nabla_{\xi^{H}}\mathbf{h})\pi_{\ast}X - \phi \pi_{\ast}X + \mathbf{h}^{2}\phi \pi_{\ast}X \cr &  -
  \phi A^{\sharp}(\theta ([\xi^{H}, X], \xi, \bullet),
 \end{align}
which leads to the relation (\ref{cxh}).  Now, from (\ref{hccf}) we have
 \begin{align}\label{for1}
 \phi R(\xi^{H}, ( \phi(\pi_{\ast}X))^{H})\xi
 &= (\nabla_{\xi^{H}}\mathbf{h})\phi(\pi_{\ast}X) - \phi^{2}(\pi_{\ast}X) + \mathbf{h}^{2}\phi^{2}\pi_{\ast}X \cr &  -
  \phi A^{\sharp}(\theta ([\xi^{H},(\phi(\pi_{\ast}X))^{H}], \xi, \bullet)\cr
  &=  -\phi(\nabla_{\xi^{H}}\mathbf{h})(\pi_{\ast}X ) - \mathbf{h}^{2}\pi_{\ast}X-\phi^{2}\pi_{\ast}X \cr & -
  \phi A^{\sharp}(\theta ([\xi^{H}, (\phi(\pi_{\ast}X))^{H}], \xi, \bullet),
 \end{align}
and,
\begin{align}\label{for2}
 R(\xi^{H}, X)\xi &=  -\phi((\nabla_{\xi^{H}}\mathbf{h})\pi_{\ast}X)+\phi^{2}(\pi_{\ast}X)+\mathbf{h}^{2}\pi_{\ast}X  \cr
  & - \phi^{2}(A^{\sharp}(\theta([\xi^{H}, X], \xi, \bullet)).
\end{align}
Subtracting (\ref{for2}) by (\ref{for1}), we get (\ref{cxh1}).
\end{proof}
Recall that the pulled-back bundle $\pi^{\ast}TM$ is locally of dimension $2n+1$. Then, as in Riemannian case, we have a $\phi$-basis
of $\pi^{\ast}TM$ given by
\begin{equation}
          \left\{\partial_{i},\;\; \partial_{i^{\ast}}:= \phi\partial_{i},\;\; \xi\right\}_{i=1\cdots n}.
  \end{equation}
It is well know  that (see \cite{mb2} for more details), by the trace of hh-curvature of Chern connection, we have a Finslerian analogous of Ricci tensor, called
horizontal Ricci (1,1;0)-tensor, denoted $Ric^{H}$ and given by 
\begin{equation}\label{tra}
 Ric^{H}(\overline{Z},X)= trace_{g}( \overline{Y}\mapsto R(X,\overline{Y}^{H})\overline{Z}),
\end{equation}
where $X\in \Gamma(TTM_{0})$ and $\overline{Y}, \overline{Z} \in \Gamma(\pi^{\ast}TM)$.
 \begin{proposition}\label{hkcs}
   On contact metric Finsler pulled back bundle  $(\pi^{\ast}TM, \phi, \eta, \xi,g)$, the horizontal Ricci $(1,1;0)$-tensor curvature
   with respect to     $\xi$ and $\xi^{H}$, is given by:
   \begin{align}
    Ric^{H}(\xi,\xi^{H}) & = - 2n + 2 trace_{g}\mathbf{h}^{2}  + \sum_{i=1}^{n}\frac{1}{2}\left\{A(\theta([\xi^{H}, \delta_{i}]), \xi,\partial_{i})\right.\nonumber\\
    &\left.-A(\theta([\xi^{H}, \delta_{i^{\ast}}]), \xi,\phi\partial_{i})\right\},
   \end{align}
where $\delta_{i}= \partial_{i}^{H}$ and $\delta_{i^{\ast}}= \partial_{i^{\ast}}^{H}$, for all $i=1,\cdots,n$.
 \end{proposition}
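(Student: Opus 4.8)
The plan is to evaluate $Ric^{H}(\xi,\xi^{H})$ straight from its definition (\ref{tra}) as the $g$-trace of the curvature operator $Q\overline{W}:=R(\xi^{H},\overline{W}^{H})\xi$, computed on the $\phi$-basis $\{\partial_{i},\partial_{i^{\ast}}=\phi\partial_{i},\xi\}$. I would take this frame $g$-orthonormal, which is legitimate because $g$ is compatible with $(\phi,\eta,\xi)$ and $\eta(\partial_{i})=\eta(\partial_{i^{\ast}})=0$. Then $Ric^{H}(\xi,\xi^{H})=\sum_{i=1}^{n}\big(g(R(\xi^{H},\delta_{i})\xi,\partial_{i})+g(R(\xi^{H},\delta_{i^{\ast}})\xi,\partial_{i^{\ast}})\big)$, the $\xi$-slot contributing nothing since $R(\xi^{H},\xi^{H})=0$.

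Next I would bring in Proposition \ref{proric}. Equation (\ref{cxh1}) identifies the $\phi$-antisymmetrised operator $L\overline{W}:=\tfrac12\big(Q\overline{W}-\phi Q(\phi\overline{W})\big)$ with its right-hand side. The key reduction is that $trace_{g}L=trace_{g}Q$: using $g(\phi u,v)=-g(u,\phi v)$, the relation $\phi^{2}=-\mathbb{I}+\eta\otimes\xi$ and $g(Q\xi,\xi)=0$, one checks on the $\phi$-basis that $\overline{W}\mapsto\phi\overline{W}$ permutes the non-$\xi$ frame vectors up to sign, whence $trace_{g}(\phi Q\phi)=-trace_{g}Q$. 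Consequently $Ric^{H}(\xi,\xi^{H})$ equals the $g$-trace of the right-hand side of (\ref{cxh1}), which splits into four elementary traces.

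I would then evaluate these four traces. For the $\phi^{2}$-term, $\phi^{2}=-\mathbb{I}+\eta\otimes\xi$ gives $trace_{g}\phi^{2}=-(2n+1)+1=-2n$, producing the $-2n$ summand. For the $\mathbf{h}^{2}$-term I would invoke Corollary \ref{ach}: since $\mathbf{h}$ is symmetric, anti-commutes with $\phi$ and satisfies $\mathbf{h}\xi=0$, one has $g(\mathbf{h}^{2}\partial_{i^{\ast}},\partial_{i^{\ast}})=g(\mathbf{h}^{2}\partial_{i},\partial_{i})$ and $g(\mathbf{h}^{2}\xi,\xi)=0$, which controls $trace_{g}\mathbf{h}^{2}$. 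The two remaining terms are the Cartan contributions; these I would unwind with the defining identity $g(A^{\sharp}(\theta(X),\xi,\bullet),\overline{Z})=A(\theta(X),\xi,\overline{Z})$ together with $\phi^{2}=-\mathbb{I}+\eta\otimes\xi$ and the skew relation for $\phi$. Because $\eta(\partial_{i})=\eta(\partial_{i^{\ast}})=0$ and $[\xi^{H},\xi^{H}]=0$, every $\eta$-weighted and every $A(\cdot,\xi,\xi)$ piece drops out, leaving precisely the $A(\theta([\xi^{H},\delta_{i}]),\xi,\partial_{i})$ and $A(\theta([\xi^{H},\delta_{i^{\ast}}]),\xi,\phi\partial_{i})$ terms. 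Collecting the four contributions then yields the asserted expression.

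The main obstacle is the bookkeeping in the two Cartan traces: one must track how the horizontal bracket $[\xi^{H},(\cdot)^{H}]$ couples to $\phi$ acting on the traced frame vector, and carefully separate the tangential part $-\mathbb{I}$ from the $\xi$-directed part $\eta\otimes\xi$ of $\phi^{2}$, so that no spurious terms survive. I would pin down the final numerical coefficients by checking the Riemannian specialisation, where $A\equiv0$, the Chern connection becomes the Levi-Civita connection, and the formula must reduce to Blair's classical identity for $Ric(\xi,\xi)$ on a contact metric manifold; this is the natural consistency test for the signs and for the factor multiplying $trace_{g}\mathbf{h}^{2}$.
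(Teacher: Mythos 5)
Your proposal is correct and takes essentially the same route as the paper: the trace identity $trace_{g}(\phi Q\phi)=-trace_{g}Q$ by which you reduce $Ric^{H}(\xi,\xi^{H})$ to the trace of the $\phi$-antisymmetrised operator is exactly the paper's pairing of $\partial_{i}$ with $\partial_{i^{\ast}}=\phi\partial_{i}$ in the $\phi$-basis, after which both arguments substitute equation (\ref{cxh1}) of Proposition \ref{proric} and evaluate the same four traces (the $\phi^{2}$-term, the $\mathbf{h}^{2}$-term, and the two Cartan contributions) using $R(\xi^{H},\xi^{H})\xi=0$, $\eta(\partial_{i})=\eta(\partial_{i^{\ast}})=0$ and Corollary \ref{ach}. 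One caveat on your final step: a careful tally along this common route produces coefficient $1$ on $trace_{g}\mathbf{h}^{2}$ and coefficient $-1$ on each Cartan sum (this agrees with the paper's own last display, not with the stated $2$ and $\pm\tfrac{1}{2}$), and your proposed Riemannian consistency check cannot pin down the Cartan coefficients since $A\equiv 0$ in that specialisation, so it only certifies the $-2n$ term and the $\mathbf{h}^{2}$ factor.
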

 \begin{proof}
  By (\ref{tra}), and the fact that $R(\xi^{H}, \xi^{H})\xi=0$ we have
  \begin{eqnarray}
   Ric^{H}(\xi, \xi^{H})&=&\sum_{i=1}^{n}\left[g(R(\xi^{H},\delta_{i})\xi, \partial_{i})
   +g(R(\xi^{H},\delta_{i^{\ast}})\xi, \partial_{i^{\ast}})\right]\cr
   &=& \sum_{i=1}^{n}\left[g(R(\xi^{H},\delta_{i})\xi-\phi R(\xi^{H},\phi\delta_{i})\xi, \partial_{i})\right].\nonumber
  \end{eqnarray}
Then, by Proposition \ref{proric}, we obtain
\begin{align}
   Ric^{H}(\xi, \xi^{H})&= \sum_{i=1}^{n}2g\left( \mathbf{h}^{2}\partial_{i}
 + \phi^{2}\partial_{i} + \frac{1}{2} \phi^{2}A^{\sharp}(\theta([\xi^{H}, \delta_{i}]), \xi, \bullet), \partial_{i}\right)\cr
 &  +\sum_{i=1}^{n}g\left(\phi A^{\sharp}(\theta([\xi^{H}, \phi \delta_{i}],\xi,\bullet)), \partial_{i}\right),\nonumber
   \end{align}
 which proves the assertion.
 \end{proof}
 \begin{theorem}
 A contact metric Finsler pulled back bundle  $(\pi^{\ast}TM, \phi, \eta, \xi,g)$ is horizontally k-contact if and only if
  \begin{equation}
    Ric^{H}(\xi,\xi^{H}) = - 2n + \sum_{i=1}^{n}\left(A(\theta([\xi^{H}, \delta_{i}]), \xi,\partial_{i})
   -A(\theta([\xi^{H}, \delta_{i^{\ast}}]), \xi,\phi\partial_{i})\right).\nonumber
   \end{equation}
 \end{theorem}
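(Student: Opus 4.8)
The plan is to reduce the asserted identity to the vanishing of the operator $\mathbf{h}=\frac{1}{2}\mathcal{N}_{H}^{(3)}=\frac{1}{2}\mathcal{L}_{\xi^{H}}\phi$, and then to recognise that vanishing as the horizontally $K$-contact condition. Indeed, comparing the formula to be proved with the expression for $Ric^{H}(\xi,\xi^{H})$ already computed in Proposition \ref{hkcs}, the two differ only by the term $2\,trace_{g}\mathbf{h}^{2}$ (the Cartan-tensor sum being common to both). Hence the whole statement rests on the equivalence chain
\[
trace_{g}\mathbf{h}^{2}=0 \iff \mathbf{h}=0 \iff (\phi,\eta,\xi,g)\ \text{is horizontally }K\text{-contact}.
\]

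First I would establish the rightmost equivalence. By definition, a contact metric Finsler structure is horizontally $K$-contact exactly when $\xi$ is horizontally Killing, and Theorem \ref{theofdci} asserts that, on such a structure, $\mathcal{N}_{H}^{(3)}$ vanishes if and only if $\xi$ is horizontally Killing. Since $\mathbf{h}=\frac{1}{2}\mathcal{N}_{H}^{(3)}$, this immediately yields $\mathbf{h}=0$ if and only if the structure is horizontally $K$-contact.

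Next I would prove the middle equivalence $trace_{g}\mathbf{h}^{2}=0\iff\mathbf{h}=0$. The essential point is that $\mathbf{h}$ is self-adjoint with respect to the fundamental tensor $g$ (the symmetry of $\mathbf{h}$ shown above), and $g$ is positive-definite by the Finsler axioms. Therefore $\mathbf{h}$ is $g$-diagonalizable with real eigenvalues $\lambda_{i}$, so that $trace_{g}\mathbf{h}^{2}=\sum_{i}\lambda_{i}^{2}\ge 0$, with equality precisely when every $\lambda_{i}=0$, i.e. $\mathbf{h}=0$. Corollary \ref{ach}, giving $\mathbf{h}\phi+\phi\mathbf{h}=0$ and $trace_{g}\mathbf{h}=0$, confirms that the spectrum is symmetric about the origin, but only positive-definiteness of $g$ is strictly needed for the non-negativity of $trace_{g}\mathbf{h}^{2}$.

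Finally I would assemble the two directions against Proposition \ref{hkcs}. If the structure is horizontally $K$-contact, then $\mathbf{h}=0$, so $trace_{g}\mathbf{h}^{2}=0$, and the formula of Proposition \ref{hkcs} collapses to $-2n$ plus the Cartan-tensor sum, which is the stated identity. Conversely, if that identity holds, subtracting it from Proposition \ref{hkcs} forces $2\,trace_{g}\mathbf{h}^{2}=0$, whence $\mathbf{h}=0$ by the middle equivalence and the structure is horizontally $K$-contact. I expect the one genuinely delicate step to be the implication $trace_{g}\mathbf{h}^{2}=0\Rightarrow\mathbf{h}=0$: this is exactly where the positive-definiteness of $g$ and the self-adjointness of $\mathbf{h}$ are indispensable, since for an indefinite metric a nonzero self-adjoint operator could have $trace_{g}\mathbf{h}^{2}=0$ and the equivalence would fail.
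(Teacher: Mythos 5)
Your proposal is correct and takes essentially the same route as the paper: the paper's one-line proof likewise derives the theorem from Proposition \ref{hkcs} together with the fact that $\mathbf{h}=0$ characterizes the horizontally $K$-contact case (via Theorem \ref{theofdci}), and your spectral argument --- $\mathbf{h}$ symmetric with respect to the positive-definite $g$, so $trace_{g}\mathbf{h}^{2}=0$ forces $\mathbf{h}=0$ --- is precisely the converse step that the paper's terse proof leaves implicit. One caveat is purely textual: as printed, the Cartan-tensor sum carries a factor $\frac{1}{2}$ in Proposition \ref{hkcs} but not in the theorem, an internal inconsistency (evidently a typo) in the paper itself, which your phrase ``the Cartan-tensor sum being common to both'' silently normalizes in the only reading under which the two statements are compatible.
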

\begin{proof}
The proof is derived from Proposition \ref{hkcs} and the fact that, $\mathbf{h}=0$ for the horizontal $K$-contact metric Finsler structures.
\end{proof}
In case of the flag curvature \cite{AA}, we have a Finslerian
analogous part of Hatakeyama, Ogawa and Tanno's result \cite{HOT}.
\begin{theorem}
Let $(\pi^{\ast}TM, \phi, \eta, \xi,g)$ be a horizontally K-contact metric Finsler pulled back bundle. Then, the flag curvature with the transverse edge $\xi$ is equal to $1$.
\end{theorem}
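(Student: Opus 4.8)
The plan is to reduce the K-contact hypothesis to the vanishing of the operator $\mathbf{h}$, read off the $hh$-curvature $R(\xi^{H},\overline{X}^{H})\xi$ from the formulas already established in Proposition \ref{proric}, and then evaluate the flag curvature on a flag whose transverse edge is $\xi$. Recall that for a unit section $\overline{X}\in\Gamma(\pi^{\ast}TM)$ orthogonal to $\xi$ (i.e. $\eta(\overline{X})=g(\overline{X},\xi)=0$), the flag curvature of the flag spanned by $\overline{X}$ and the transverse edge $\xi$ is
\begin{equation}
 K(\overline{X},\xi)=\frac{g(R(\overline{X}^{H},\xi^{H})\xi,\overline{X})}{g(\overline{X},\overline{X})\,g(\xi,\xi)-g(\overline{X},\xi)^{2}}=g(R(\overline{X}^{H},\xi^{H})\xi,\overline{X}),\nonumber
\end{equation}
since the metric compatibility forces $g(\xi,\xi)=1$. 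As the bundle is horizontally K-contact, $\xi$ is horizontally Killing, so by Theorem \ref{theofdci} together with the discussion following Lemma \ref{cordcx} we have $\mathbf{h}=\frac{1}{2}\mathcal{N}_{H}^{(3)}=0$ identically, hence also $\nabla_{\xi^{H}}\mathbf{h}=0$.

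First I would compute $\nabla_{\overline{X}^{H}}\xi$. By Lemma \ref{dch} with $\mathbf{h}=0$ one has $\nabla_{X}\xi=-\phi\pi_{\ast}X+A^{\sharp}(\theta(X),\xi,\bullet)-2A(\theta(X),\xi,\xi)\xi$, and for a horizontal argument $X=\overline{X}^{H}$ Proposition \ref{PropIso} gives $\theta(\overline{X}^{H})=0$, so both Cartan terms drop and $\nabla_{\overline{X}^{H}}\xi=-\phi\overline{X}$, exactly as in the Riemannian K-contact case. Next I would feed $\mathbf{h}=0$ and $\nabla_{\xi^{H}}\mathbf{h}=0$ into relation (\ref{cxh}) of Proposition \ref{proric}; it collapses to
\begin{equation}
 \phi R(\xi^{H},\overline{X}^{H})\xi=-\phi\overline{X}-\phi A^{\sharp}(\theta[\xi^{H},\overline{X}^{H}],\xi,\bullet).\nonumber
\end{equation}
Applying $\phi$ once more, using $\phi^{2}=-\mathbb{I}+\eta\otimes\xi$ and the skew-symmetry $g(R(\xi^{H},\overline{X}^{H})\xi,\xi)=0$, this yields
\begin{equation}
 R(\xi^{H},\overline{X}^{H})\xi=\phi^{2}\overline{X}+\phi^{2}A^{\sharp}(\theta[\xi^{H},\overline{X}^{H}],\xi,\bullet).\nonumber
\end{equation}

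To finish, I would pair this identity with $\overline{X}$. For a unit $\overline{X}$ orthogonal to $\xi$ one has $g(\phi^{2}\overline{X},\overline{X})=-g(\overline{X},\overline{X})=-1$, while $g(\phi^{2}A^{\sharp}(\theta[\xi^{H},\overline{X}^{H}],\xi,\bullet),\overline{X})=-A(\theta[\xi^{H},\overline{X}^{H}],\xi,\overline{X})$ because $\eta(\overline{X})=0$. Hence $g(R(\xi^{H},\overline{X}^{H})\xi,\overline{X})=-1-A(\theta[\xi^{H},\overline{X}^{H}],\xi,\overline{X})$, and using $R(\overline{X}^{H},\xi^{H})\xi=-R(\xi^{H},\overline{X}^{H})\xi$ we obtain
\begin{equation}
 K(\overline{X},\xi)=1+A(\theta[\xi^{H},\overline{X}^{H}],\xi,\overline{X}).\nonumber
\end{equation}

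The main obstacle is precisely to kill the residual Cartan term $A(\theta[\xi^{H},\overline{X}^{H}],\xi,\overline{X})$; this is the genuinely Finslerian point, invisible in the Riemannian statement of \cite{HOT}, where the Cartan tensor vanishes and the argument terminates at $R(\overline{X}^{H},\xi^{H})\xi=-\phi^{2}\overline{X}=\overline{X}$. I expect this term to disappear from the total symmetry of $A$ together with the homogeneity relation $A(\mathbf{y},\cdot,\cdot)=0$ for the canonical section $\mathbf{y}$, so that $A(\cdot,\xi,\cdot)$ drops out once $\xi$ is identified with the distinguished direction of $\pi^{\ast}TM$; after that $K(\overline{X},\xi)=1$ for every flag with transverse edge $\xi$, which is the sought Finslerian analogue of the Hatakeyama--Ogawa--Tanno theorem. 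Controlling the vertical bracket $\theta[\xi^{H},\overline{X}^{H}]$ and verifying this vanishing is the delicate step I would isolate as a lemma before assembling the computation above.
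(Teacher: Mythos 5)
Your reduction is essentially the paper's: horizontal K-contactness gives $\mathbf{h}=\frac{1}{2}\mathcal{L}_{\xi^{H}}\phi=0$, you feed this into relation (\ref{cxh}) of Proposition \ref{proric} (equivalently (\ref{for2})), and after pairing you arrive at
\begin{equation}
 K = 1 + A(\theta[\xi^{H},\overline{X}^{H}],\xi,\overline{X}),\nonumber
\end{equation}
which is exactly the paper's intermediate expression with $\overline{X}$ in place of $l$. The genuine gap is the one you flag yourself: you do not kill the residual Cartan term, and the mechanism you propose for killing it is the wrong one. You hope that $A(\cdot,\xi,\cdot)=0$ ``once $\xi$ is identified with the distinguished direction of $\pi^{\ast}TM$''; but nothing in the hypotheses identifies $\xi$ with the canonical section $l=\frac{y^{i}}{F}\frac{\partial}{\partial x^{i}}$, and by total symmetry of $A$ the identity $A(\cdot,\xi,\cdot)=0$ would force $\xi$ to be proportional to $l$ --- an extra assumption the theorem does not make. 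So the lemma you plan to isolate is false in the generality you need, and for arbitrary unit $\overline{X}\perp\xi$ the term $A(\theta[\xi^{H},\overline{X}^{H}],\xi,\overline{X})$ genuinely need not vanish.

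What you are missing is that the distinguished direction sits in the \emph{flag}, not in $\xi$. In the Bao--Chern--Shen definition \cite{AA} the flagpole of every flag is the canonical section $l$; ``flag curvature with transverse edge $\xi$'' therefore means the single quantity $K(l,\xi)=g(R(l^{H},\xi^{H})\xi,l)$, not a family indexed by arbitrary unit sections $\overline{X}$ orthogonal to $\xi$ --- you are not free to vary the second edge. With $\overline{X}=l$ the residual term becomes $A(\theta[\xi^{H},l^{H}],\xi,l)$, which vanishes for free because the Cartan tensor annihilates the distinguished direction in any slot ($A_{ijk}y^{k}=0$, by zero-homogeneity of $g_{ij}$ in $y$, together with total symmetry of $A$); the leftover piece $A(\theta[\xi^{H},l^{H}],\xi,\xi)\,\eta(l)$ dies since $\eta(l)=g(\xi,l)=0$ for the orthonormalized flag. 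This is precisely why the paper can drop the term $\phi^{2}A^{\sharp}(\theta[\xi^{H},l^{H}],\xi,\bullet)$ in passing to the last line of its proof, and it is the step your plan cannot reproduce for general $\overline{X}$. (A smaller caveat, which your write-up shares with the paper: the ``skew-symmetry'' $g(R(\xi^{H},\overline{X}^{H})\xi,\xi)=0$ is not automatic for the Chern connection, which is only almost $g$-compatible, so it holds only up to Cartan-tensor corrections; these are again invisible precisely when the distinguished direction occupies a slot.)
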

\begin{proof}
Assume that $(\pi^{\ast}TM, \phi, \eta, \xi,g)$ is horizontally
K-contact then $\mathbf{h}= 0$. On the other hand, the flag
curvature with the transverse edge $\xi$ is given by
\begin{align}
K(l,\xi)&=  g(R(l^{H}, \xi^{H})\xi, l) =  g (\phi
\nabla_{\xi^{H}}\mathbf{h}l-\phi^{2}l-\mathbf{h}^{2}l-\phi^{2}A^{\sharp}(\theta[\xi^{H},l^{H}],\xi,\bullet),l )\nonumber\\
&= -g(\nabla_{\xi^{H}}\mathbf{h}l,\phi l)+ 1- g(\mathbf{h}^{2}l,l),\nonumber
\nonumber
\end{align}
and the assertion follows by hypothesis.
\end{proof}

\section*{Acknowledgments}

The second author would like to thank the University of KwaZulu-Natal for financial support.

 \bibliographystyle{plain}

\end{document}